\def\draft{1}
\def\conferenceversion{1}

\documentclass[11pt]{article}

\usepackage{etex}
\usepackage{verbatim}
\usepackage{xspace,enumerate}
\usepackage[dvipsnames]{xcolor}
\usepackage[T1]{fontenc}
\usepackage[full]{textcomp}
\usepackage[american]{babel}
\usepackage{mathtools}
\usepackage{amsthm}
\usepackage[normalem]{ulem}
\usepackage{fullpage}
\usepackage{tcolorbox}
\usepackage{todonotes}

\usepackage{mathpazo}
\usepackage{multicol}
\usepackage{comment}
\usepackage{esvect}
\usepackage{xcolor}
\usepackage{cases}
\usepackage{mathrsfs}
\usepackage{ifthen}
\usetikzlibrary{backgrounds}

\usepackage[
letterpaper,
top=1in,
bottom=1in,
left=1in,
right=1in]{geometry}
\usepackage{newpxtext} %
\usepackage{textcomp} %
\usepackage[varg,bigdelims]{newpxmath}
\usepackage[scr=rsfso]{mathalfa}%
\usepackage{bm} %
\linespread{1.1}%
\let\mathbb\varmathbb
\usepackage{microtype}
\usepackage[pagebackref,bookmarksnumbered,colorlinks=true,urlcolor=blue,linkcolor=blue,citecolor=OliveGreen]{hyperref}
\usepackage[capitalise,nameinlink]{cleveref}
\crefname{lemma}{Lemma}{Lemmas}
\crefname{fact}{Fact}{Facts}
\crefname{theorem}{Theorem}{Theorems}
\crefname{corollary}{Corollary}{Corollaries}
\crefname{claim}{Claim}{Claims}
\crefname{example}{Example}{Examples}
\crefname{algorithm}{Algorithm}{Algorithms}
\crefname{problem}{Problem}{Problems}
\crefname{definition}{Definition}{Definitions}
\crefname{exercise}{Exercise}{Exercises}
\usepackage{amsthm}

\newtheorem{theorem}{Theorem}[section]
\newtheorem*{theorem*}{Theorem}
\newtheorem{lemma}[theorem]{Lemma}
\newtheorem*{lemma*}{Lemma}

\newtheorem*{fact*}{Fact}
\newtheorem{proposition}[theorem]{Proposition}
\newtheorem*{proposition*}{Proposition}
\newtheorem{corollary}[theorem]{Corollary}
\newtheorem*{corollary*}{Corollary}

\newtheorem*{hypothesis*}{Hypothesis}

\newtheorem*{conjecture*}{Conjecture}
\theoremstyle{definition}
\newtheorem{definition}[theorem]{Definition}
\newtheorem*{definition*}{Definition}

\newtheorem*{construction*}{Construction}

\newtheorem*{example*}{Example}

\newtheorem*{question*}{Question}
\newtheorem{algorithm}[theorem]{Algorithm}
\newtheorem*{algorithm*}{Algorithm}

\newtheorem*{assumption*}{Assumption}

\newtheorem*{problem*}{Problem}

\newtheorem*{openquestion*}{Open Question}
\theoremstyle{remark}

\newtheorem*{claim*}{Claim}

\newtheorem*{remark*}{Remark}

\newtheorem*{observation*}{Observation}
\usepackage{paralist}
\frenchspacing
\let\originalleft\left
\let\originalright\right
\renewcommand{\left}{\mathopen{}\mathclose\bgroup\originalleft}
\renewcommand{\right}{\aftergroup\egroup\originalright}
\usepackage{turnstile}
\usepackage{mdframed}
\usepackage{tikz}
\usetikzlibrary{positioning}
\usetikzlibrary{fit,backgrounds}
\usetikzlibrary{arrows.meta,
                decorations.markings}
\usepackage{tkz-graph}
\usetikzlibrary{arrows}
\usetikzlibrary{fit,shapes.geometric} 

\usetikzlibrary{calc,backgrounds}   
\usepackage{subcaption}
\usepackage{float}
\usepackage{caption}
\DeclareCaptionType{Algorithm}
\usepackage{array}
\usepackage{bbm}
\usepackage{xparse}
\usepackage{amsthm} %
\makeatletter
\let\latexparagraph\paragraph
\RenewDocumentCommand{\paragraph}{som}{%
  \IfBooleanTF{#1}
    {\latexparagraph*{#3}}
    {\IfNoValueTF{#2}
       {\latexparagraph{\maybe@addperiod{#3}}}
       {\latexparagraph[#2]{\maybe@addperiod{#3}}}%
  }%
}
\newcommand{\maybe@addperiod}[1]{%
  #1\@addpunct{.}%
}
\makeatother

\usepackage{boxedminipage}

\newcommand{\Abs}[1]{\left\lvert#1\right\rvert}
\newcommand{\bigabs}[1]{\big\lvert#1\big\rvert}

\newcommand\bdot\bullet

\newcommand{\N}{\mathbb N}
\newcommand{\R}{\mathbb R}

\renewcommand{\leq}{\leqslant}

\renewcommand{\geq}{\geqslant}

\newcommand{\vol}{\text{vol}}
\newcommand{\phid}{\phi_{\mathit{dir}}}
\newcommand{\bdir}{\beta_{\mathit{dir}}}

\let\epsilon=\varepsilon
\numberwithin{equation}{section}
\newcommand\MYcurrentlabel{xxx}
\newcommand{\MYstore}[2]{%
  \global\expandafter \def \csname MYMEMORY #1 \endcsname{#2}%
}
\newcommand{\MYload}[1]{%
  \csname MYMEMORY #1 \endcsname%
}
\newcommand{\MYnewlabel}[1]{%
  \renewcommand\MYcurrentlabel{#1}%
  \MYoldlabel{#1}%
}
\newcommand{\MYdummylabel}[1]{}
\newcommand{\torestate}[1]{%
  \let\MYoldlabel\label%
  \let\label\MYnewlabel%
  #1%
  \MYstore{\MYcurrentlabel}{#1}%
  \let\label\MYoldlabel%
}
\newcommand{\restatedef}[1]{%
  \let\MYoldlabel\label
  \let\label\MYdummylabel
  \begin{definition*}[Restatement of \cref{#1}]
    \MYload{#1}
  \end{definition*}
  \let\label\MYoldlabel
}
\newcommand{\restatetheorem}[1]{%
  \let\MYoldlabel\label
  \let\label\MYdummylabel
  \begin{theorem*}[Restatement of \cref{#1}]
    \MYload{#1}
  \end{theorem*}
  \let\label\MYoldlabel
}
\newcommand{\restatelemma}[1]{%
  \let\MYoldlabel\label
  \let\label\MYdummylabel
  \begin{lemma*}[Restatement of \cref{#1}]
    \MYload{#1}
  \end{lemma*}
  \let\label\MYoldlabel
}
\newcommand{\restateprop}[1]{%
  \let\MYoldlabel\label
  \let\label\MYdummylabel
  \begin{proposition*}[Restatement of \cref{#1}]
    \MYload{#1}
  \end{proposition*}
  \let\label\MYoldlabel
}
\newcommand{\restatefact}[1]{%
  \let\MYoldlabel\label
  \let\label\MYdummylabel
  \begin{fact*}[Restatement of \cref{#1}]
    \MYload{#1}
  \end{fact*}
  \let\label\MYoldlabel
}
\newcommand{\restate}[1]{%
  \let\MYoldlabel\label
  \let\label\MYdummylabel
  \MYload{#1}
  \let\label\MYoldlabel
}

\allowdisplaybreaks
\sloppy


\definecolor{niceish}{HTML}{74b807} 
\ifnum\draft=1
\newcommand{\salil}[1]{\textcolor{blue}{[Salil: #1]}}
\newcommand{\jake}[1]{\textcolor{violet}{[Jake: #1]}}
\newcommand{\tom}[1]{\textcolor{WildStrawberry}{[Tommaso: #1]}}
\newcommand{\chris}[1]{\textcolor{niceish}{[Chris: #1]}}
\newcommand{\jiyu}[1]{\textcolor{Orange}{[Jiyu: #1]}}
\else
\newcommand{\salil}[1]{}
\newcommand{\jake}[1]{}
\newcommand{\tom}[1]{}
\newcommand{\chris}[1]{}
\newcommand{\jiyu}[1]{}
\fi

\newcommand{\om}{\om}



\usepackage[dvipsnames]{xcolor}   %
\usetikzlibrary{intersections}   
\usepackage[dvipsnames]{xcolor}

\RequirePackage[outline]{contour} 
\contourlength{0.065pt}
\contournumber{10}%

\usetikzlibrary{arrows.meta}    

\ifpdf
\hypersetup{
    pdftitle={Directed Spectral Graph Theory},
    pdfauthor={ Jake Ruotolo, Salil Vadhan}
}
\fi

\setcounter{MaxMatrixCols}{20}

\begin{document}
\date{}

\title{
Singular Values Versus Expansion\\ in Directed and Undirected Graphs
}

\ifnum\conferenceversion=0
\author{Author names omitted}
\else
\author{Jake Ruotolo\thanks{School of Engineering and Applied Sciences, Harvard University. \texttt{jakeruotolo@g.harvard.edu}. Supported by a Simons Investigator grant to S. Vadhan.}
\and Salil Vadhan\thanks{School of Engineering and Applied Sciences, Harvard University.
\texttt{salil\_vadhan@harvard.edu}. Supported by a Simons Investigator grant.}}
\fi

\maketitle

\thispagestyle{empty}
\begin{abstract}
We relate the nontrivial singular values $\sigma_2,\ldots,\sigma_n$ of the normalized adjacency matrix of an Eulerian directed graph to combinatorial measures of graph expansion:
\begin{itemize}
\item We introduce a new directed analogue of conductance $\phid$, and prove a Cheeger-like inequality showing that $\phid$ is bounded away from 0 iff $\sigma_2$ is bounded away from 1.  In undirected graphs, this can be viewed as a unification of the standard Cheeger Inequality and Trevisan's Cheeger Inequality for the smallest eigenvalue.
\item We prove a singular-value analogue of the Higher-Order Cheeger Inequalities, giving a combinatorial characterization of when 
$\sigma_k$ is bounded away from 1.
\item We tighten the relationship between $\sigma_2$ and vertex expansion, proving that if a $d$-regular graph $G$ with the property that all sets $S$ of size at most $n/2$ have at least $(1+\delta)\cdot |S|$ out-neighbors, then $1-\sigma_2=\Omega(\delta^2/d)$.  This bound is tight and saves a factor of $d$ over the previously known relationship.
\end{itemize}
\end{abstract}
\clearpage
\thispagestyle{empty}

\clearpage
\thispagestyle{empty}
\microtypesetup{protrusion=false}
\tableofcontents{}
\microtypesetup{protrusion=true}

\clearpage
\pagestyle{plain}
\setcounter{page}{1}

\section{Introduction}

The notion of graph expansion is pervasive in theoretical computer science.  (See the survey \cite{HLW06}.)  Arguably, one of the reasons for this is its robustness: many different natural formulations of graph expansion are approximately equivalent (e.g. notions of spectral expansion, edge expansion, and vertex expansion).  

Much of the study of graph expansion has focused on undirected graphs.  However, there are settings where expansion of {\em directed graphs} arises naturally, and would benefit from a more well-developed theory.  Examples include the study of nonreversible Markov chains~\cite{mihail1989conductance,fill1991eigenvalue} and the derandomization of space-bounded computation~\cite{ReingoldTrVa06,RozenmanVa05,ChungReVa11,AKMPSV20,HozaPyVa21,PyneVa21-WPRG,chen2023weighted,CL20}. 

\paragraph{Spectral Expansion of Directed Graphs.}  The standard spectral measure of expansion for directed graphs is given by the second-largest singular value of the normalized adjacency matrix, which we will define more precisely now.  Specifically, let $G$ be a (possibly weighted) {\em Eulerian} directed graph, where the (weighted) in-degree of each vertex $v$ equals the (weighted) out-degree, both of which we denote by $d(v)$.  Like with undirected graphs, the stationary distribution of the random walk on an Eulerian graph $G$ is proportional to the degree vector $d$. Moreover, every directed graph can be made Eulerian without changing the random-walk matrix by reweighting the edges according to the stationary distribution.

Let $M$ be the (weighted) adjacency matrix of $G$, and $D$ the diagonal matrix of vertex degrees.  Then the {\em normalized adjacency matrix} of $G$ is $A = D^{-1/2}M D^{-1/2}$. The largest singular value of $A$ is 1, with (left and right) singular vector $d^{1/2}$, the vector whose $v$'th entry is $d(v)^{1/2}$.  We write $\sigma_2(G)$ to denote the second-largest singular value of $A$.  We have $\sigma_2(G)\in [0,1]$ with an ``expander'' being a graph where $\sigma_2(G)$ is bounded away from 1, and smaller values of $\sigma_2(G)$ corresponding to better expansion. Indeed,
it is well-known that random walks on $G$ mix in $\ell_2$ norm (normalized by the stationary distribution) in time $O(1/(1-\sigma_2(G)))$.

The use of singular values (rather than eigenvalues) arises naturally for directed graphs, since when $A$ is assymetric, it need not have an orthonormal basis of eigenvectors.  That said, $\sigma_2(G)$ is also commonly used for undirected graphs, where it is referred to as {\em two-sided} spectral expansion, since it bounds the magnitude of both the positive and negative nontrivial eigenvalues. 

The main question we study in this work is:
\begin{center}
    \textit{Can we establish approximate equivalences between $\sigma_2(G)$ \\and combinatorial measures of expansion?}
\end{center}
A positive answer to this question was known for the regime where $\sigma_2(G)$ is close to 0.  The Expander Mixing Lemma~\cite{ALON-Chung} (See Lemma 4.15 \cite{pseudorandomness_Vadhan}.) shows that the fraction of $G$'s edges that go between any two sets $S$ and $T$ of vertices is approximately equal to the product $\mu(S)\mu(T)$ of the densities of $S$ and $T$, with an error term that vanishes $\sigma_2(G)\sqrt{\mu(S)\cdot \mu(T)}$. Here for $R\subseteq V$, $\mu(R) = \vol(R)/\vol(V)$ denotes the stationary probability mass of $R$, and $\vol(R)=\sum_{v\in R} d(v)$.
Bilu and Linial~\cite{bilu-linial} proved a converse for undirected graphs, showing that if the aforementioned quasirandomness condition holds for all sets $S$ and $T$ with an error term $\alpha\cdot \sqrt{\mu(S)\mu(T)}$, then we have $\sigma_2(G)=O(\alpha\cdot \log(1/\alpha))$. Butler~\cite{BUTLER-LME} generalized the Expander Mixing Lemma and this converse to irregular directed graphs.

Thus, our focus is on the regime where $\sigma_2(G)$ is close to 1, and being an ``expander'' means only that $\sigma_2(G)$ is bounded away from 1. 

\paragraph{A Singular-Value Cheeger Inequality.}  Recall the discrete Cheeger Inequality~\cite{AM85}, which says that for every undirected graph $G$, we have:
$$\frac{1-\mu_2(G)}{2} \leq \phi(G) \leq \sqrt{2(1-\mu_2(G))},$$
where $\mu_2(G)$ is the second-largest {\em eigenvalue} of $G$'s normalized adjacency matrix, and $\phi(G)$ is $G$'s {\em conductance}, defined as follows:
$$\phi(G) = \min_{S : \vol(S)\leq 1/2} \frac{e(S,S^c)}{\vol(S)},$$
where $e(S,T)$ denotes the sum of the edge weights between $S$ and $T$.  Cheeger's Inequality is a robust version of the fact that $\mu_2(G)=1$ iff $G$ is disconnected iff $\phi(G)=0$.

For directed graphs $G$, we introduce a new {\em directed conductance}, denoted $\phid(G)$, which satisfies the following {\em singular-value Cheeger Inequality}:
$$\frac{1-\sigma_2(G)}{2} \leq \phid(G) \leq
\sqrt{2(1-\sigma_2(G))}.$$
Specifically, directed conductance is defined as follows:
$$\phid(G) = \min_{S,T\subseteq V : \mu(S)+\mu(T)\leq 1} \frac{e(S,T^c)+e(S^c,T)}{\vol(S)+\vol(T)}.$$
Notice that $\phid(G)=0$ iff there are sets $S$ and $T$ such that all edges leaving $S$ enter $T$ and all edges entering $T$ come from $S$.  Notice that this implies that there is no volume growth in one step of the random walk: if we start at the stationary distribution conditioned on being in $S$, then after one step we are at the stationary distribution conditioned on being in $T$.  This implies that $\sigma_2(G)=1$, and an extreme case of our singular-value Cheeger Inequality says that it is in fact equivalent to $\sigma_2(G)=1$.

In addition, we prove that if $G$ is undirected, then the minimum in $\phid(G)$ is approximately achieved by sets $S,T$ such that $S=T$ or such that $S$ and $T$ are disjoint.  In the case that $S=T$, $\phid(G)$ becomes equal to ordinary conductance $\phi(G)$.  In case $S$ and $T$ are disjoint, then $\phid(G)$ is within a constant factor of Trevisan's {\em bipartiteness ratio} $\beta(G)$. 
Thus, for undirected graphs, our singular-value Cheeger Inequality can be viewed as a combination of the ordinary Cheeger Inequality and Trevisan's bipartiteness Cheeger Inequality (which relates $\beta(G)$ to $1+\mu_n(G)$, where $\mu_n(G)\leq 0$ is the smallest eigenvalue of $A$, and thus $\sigma_2(G)=\max\{\mu_2(G),-\mu_n(G)\}$).

We also provide singular-value analogues of the {\em higher-order Cheeger Inequalities}~\cite{higher-order-cheeger}. These show that  $\sigma_k(G)$ is close to 1 iff there are two partitions $(S_1,\ldots,S_k)$ and $(T_1,\ldots,T_K)$ of $V$ such that $\max_i \phid(S_i,T_i)$ is small. 

\paragraph{Singular Values versus Vertex Expansion.}  Next we turn our attention to the relation between $\sigma_2(G)$ and {\em vertex expansion}.  A digraph $G$ is a {\em $(k,c)$-vertex expander} if for every set $S$ of vertices with $|S|\leq k$, we have $|N(S)|\geq c\cdot |S|$.  Since vertex expansion is invariant under adding self-loops, we will assume that $G$ is a $d$-regular graph for a constant $d$.
A standard fact (cf. \cite{pseudorandomness_Vadhan}) is that every regular digraph $G$ is an $(n/2,1+\delta)$-vertex expander for $\delta=1-\sigma_2(G)$.  Alon~\cite{AlonEigenvaluesExpanders} proved a partial converse, showing that if a $d$-regular undirected graph is an $(n/2,1+\delta)$-vertex expander, then $1-\mu_2 = \Omega(\delta^2/d)$.
This converse is partial in that it applies only to undirected graphs and only lower bounds $1-\mu_2$ rather than $1-\sigma_2$. But, the bound $1-\mu_2\geq \Omega(\delta^2/d)$ is tight by considering an undirected cycle of length $n = 4/\delta$ with $d-2$ self loops at each vertex.  In \cite{pseudorandomness_Vadhan}, it is observed that we can overcome these limitations by considering $A^TA$, which yields a bound of $1-\sigma_2 = \Omega(\delta^2/d^2)$.  This is worse than Alon's bound by a factor of $d$, coming from the fact that the graph corresponding to $A^TA$ has degree $d^2$ rather than $d$.

In this paper, we show how to save that factor of $d$.  Specifically, we prove that if $G$ is a $(n/2,1+\delta)$ vertex expander, then $1-\sigma_2(G) = \Omega(\delta^2/d)$. Which appears to be a new bound even for undirected graphs.

\paragraph{Proof Techniques.}
Many of our results are obtained by considering the \textit{symmetric lift} $sl(G)$ which is the bipartite graph with $n$ vertices on each side where we connect left vertex $u$ with right vertex $v$ if $(u,v)$ is an edge in $G$. It turns out that $\mu_k(sl(G)) = \sigma_k(G)$, so the symmetric lift gives a simple reduction from bounding singular values of directed graphs to bounding eigenvalues of undirected bipartite graphs, for which we can apply known results.

Alon \cite{AlonEigenvaluesExpanders} used the symmetric lift to relate combinatorial notions of expansion in non-bipartite and bipartite undirected graphs graphs. In an exercise, Tao \cite{tao2015expansion} points out that Lemma~\ref{lem:eig_vals_sl} gives a way to relate ``two-sided" spectral expansion in (directed or undirected) graphs, which is defined in terms of $\sigma_2$, to ``one-sided" spectral expansion defined in terms of $1-\mu_2$. 

In the past decade, the symmetric lift has been a useful tool for studying combinatorial and spectral sparsification of graphs and CSPs. Filtser and Krauthgamer \cite{CSP_krauthgamer17} used it to characterize the sparsifiability of certain binary CSPs via a reduction to cut sparsification of graphs. Their results were extended in \cite{butti2020sparsification, pelleg2023additive} using similar ideas. For spectral sparsification Ahmadinejad et al. \cite{sv_approx} introduced a new notion called singular value (SV) sparsification and used the symmetric lift of a digraph $G$ to reduce directed graph SV-sparsification to undirected graph SV-graph sparsification. The symmetric lift also played a role in \cite{kyng2016sparsified,kyng2022derandomizing}.

\section{Preliminaries}\label{sec:prelim}

\subsection{Graph Notation}
 In this paper, the most general type of graphs we work with are weighted Eulerian directed graphs allowing self-loops. 
 
 \begin{definition}
    A \textbf{weighted directed graph} (\textbf{digraph}) $G = (V,E, w)$ is a directed graph with vertex set $V$, weight function $w:V^2\to\R_{\geq 0}$ and edge set $E = \{(u,v)\in V^2: w(u,v) > 0\}$.  
 \end{definition}
 We call the graph $G$ \textbf{undirected} if the weight function is symmetric, i.e., for all $(u,v)\in V^2$ $w(u,v) = w(v,u)$. We call the graph $G$ \textbf{unweighted} if the weight function only takes values in $\{0,1\}$; in this case we omit the weight function. 
 \par The \textbf{out-degree} of a vertex $v\in V$ is $d^+(v) = \sum_{u\in V}w(v,u)$. The \textbf{in-degree} of $v$ is defined by $d^-(v) = \sum_{u\in V}w(u,v)$. The graph $G$ is called \textbf{Eulerian} if for all $v\in V$, $d^+(v) = d^-(v)$. The \textbf{out-neighborhood} $N^+(v) = \{u\in V\colon (v,u)\in E(G)\}$ and \textbf{in-neighborhood} $N^-(v) = \{u\in V\colon (u, v)\in E(G)\}$. For a subset of vertices $S\subseteq V$, the out-volume is defined by $\vol^+(S) = \sum_{v\in S}d^+(S)$ and the in-volume is $\vol^-(S) = \sum_{v\in S}d^-(S)$. 
For Eulerian graphs we often write $d(v) = d^+(v) = d^-(v)$ and  $\vol(S) = \vol^+(S) = \vol^-(S)$. When the graph is undirected we omit the superscripts.
 
 \par For subsets $A, B \subseteq V(G)$ we define $$E_G(A,B) = \{(a,b)\in A\times B\colon w(a,b) > 0\}$$ and $$e_G(A,B) = \sum_{(a,b)\in A\times B}w(a,b).$$ Observe that when $G$ is undirected, edges with $a,b \in A\cap B$ (other than self-loops) are counted twice. Note that when $G$ is unweighted, we have $e_G(A,B) = \left|E_G(A,B)\right|$. When $A = B$ we write $E_G(A,A)$ and $e_G(A,A)$ as $E_G(A)$ and $e_G(A)$, respectively. 

\subsection{Graph matrices and Spectra}
Let $G = (V,E, w)$ be a weighted Eulerian digraph on $n$ vertices. The \textbf{weighted adjacency matrix} of $G$ is the $n\times n$ matrix $M_G$ defined by $M_G(u,v) = w(u,v)$. The \textbf{normalized adjacency matrix} of $G$ is $A_G = D^{-1/2}_GMD^{-1/2}_G$, where $D_G$ is the diagonal matrix with $D_G(i,i) = d^+(i) = d^-(i)$. The matrix $D_G$ is called the \textbf{degree matrix} of $G$. If $G$ is an undirected graph, then the matrices $M_G$ and $A_G$ are symmetric hence diagonalizable. We denote the eigenvalues and singular values of $A_G$ by $1=\mu_1(G) \geq \mu_2(G)\geq \ldots \geq \mu_n(G)\geq -1$ and $1=\sigma_1(G)\geq \sigma_2(G)\geq \ldots \geq \sigma_n(G)\geq 0$ respectively. We omit the subscript or parenthesized $G$ when the underlying graph is clear. 
\par Earlier in this section we mentioned that we restrict our focus to weighted Eulerian digraphs. In fact, every digraph can be made Eulerian without changing the random walk matrix by a scaling of the edge weights, induced by a vertex re-weighting. Let $G = (V,E,w)$ be a strongly connected digraph and $D_{out}$ be the diagonal matrix such that $D_{out}(u,u) = d^+(u)$. The \textbf{random walk matrix} of $G$ is defined as $W = D^{-1}_{out}M$. By basic Markov chain theory e.g. \cite{levin2017markov}, the random walk on $G$ has a stationary distribution $\pi$ with full support, a positive vector in $\R^V$ such that $\|\pi\|_1 = 1$ and $\pi W = \pi$. Let $\Pi$ be the diagonal matrix with $\pi$ on the diagonal. Let $H$ be the graph with adjacency matrix $\Pi W$. It can be verified that $H$ is Eulerian and has the same random walk matrix as $G$. 
\subsection{The Symmetric Lift}
A standard tool in the study of directed graphs, which we will use often, is the \textbf{symmetric lift} (also known as the bipartite double cover in the case of undirected graphs); see \cite{brualdi1980bigraphs}. 

\begin{definition}
Let $G=(V,E, w)$ be a weighted digraph. The \textbf{symmetric lift} of $G$ is the undirected bipartite graph $sl(G)$ where $V(sl(G)) = V\times \{L, R\}$ and $w_{sl(G)}((v,L),(u,R)) = w_G(v,u)$.
\end{definition}

We can also define the symmetric lift of a matrix $B$ as $sl(B) = \begin{bmatrix}
    0 & B\\
    B^T & 0
\end{bmatrix}$.
 By inspection, the symmetric lift $sl(G)$ satisfies that $A_{sl(G)} = sl(A_G)$. The following lemma relates the spectrum of $A_{sl(G)}$ to the singular values of $A_G$.

\begin{lemma}\label{lem:eig_vals_sl}
    Let $G$ be a weighted directed graph with normalized adjancency matrix $A_G$. The eigenvalues of $A_{sl(G)}$ are $\pm\sigma_1(A_G), \pm\sigma_2(A_G), \ldots, \pm\sigma_n(A_G)$.
\end{lemma}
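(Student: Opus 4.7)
The plan is to verify the standard Jordan--Wielandt correspondence by explicitly exhibiting eigenvectors of $A_{sl(G)}$ from a singular value decomposition of $A_G$. Write $A_G = U \Sigma V^{\top}$ where $U,V \in \R^{n\times n}$ are orthogonal, $\Sigma = \diag(\sigma_1,\ldots,\sigma_n)$, and let $u_i, v_i$ denote the $i$-th columns of $U$ and $V$, so that $A_G v_i = \sigma_i u_i$ and $A_G^{\top} u_i = \sigma_i v_i$ for $i=1,\ldots,n$.

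For each index $i$, I would form the two block vectors
\[
x_i^+ = \tfrac{1}{\sqrt{2}} \begin{bmatrix} u_i \\ v_i \end{bmatrix}, \qquad x_i^- = \tfrac{1}{\sqrt{2}} \begin{bmatrix} u_i \\ -v_i \end{bmatrix} \in \R^{2n},
\]
and compute directly
\[
sl(A_G)\, x_i^\pm = \tfrac{1}{\sqrt{2}}\begin{bmatrix} 0 & A_G \\ A_G^{\top} & 0 \end{bmatrix} \begin{bmatrix} u_i \\ \pm v_i \end{bmatrix} = \tfrac{1}{\sqrt{2}}\begin{bmatrix} \pm \sigma_i u_i \\ \sigma_i v_i \end{bmatrix} = \pm \sigma_i\, x_i^{\pm}.
\]
This shows that each of $\pm\sigma_1,\ldots,\pm\sigma_n$ is an eigenvalue of $sl(A_G) = A_{sl(G)}$ with explicit eigenvectors.

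To conclude that these are \emph{all} of the eigenvalues (with correct multiplicities), I would check that the collection $\{x_i^+, x_i^- : 1 \le i \le n\}$ is orthonormal in $\R^{2n}$: orthonormality of the $u_i$'s and $v_i$'s immediately gives $\iprod{x_i^{\pm}, x_j^{\pm}} = \tfrac{1}{2}(\delta_{ij} \pm \delta_{ij}) \in \{0,\delta_{ij}\}$ in each case, and a direct block computation handles the cross terms $\iprod{x_i^+, x_j^-}$. Since we exhibit $2n$ orthonormal eigenvectors of the $2n\times 2n$ symmetric matrix $A_{sl(G)}$ with the claimed eigenvalues, this exhausts its spectrum.

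There is essentially no obstacle here: the argument is a direct block-matrix calculation once one writes down the SVD of $A_G$. The only minor subtlety is that when some $\sigma_i = 0$ the two vectors $x_i^+$ and $x_i^-$ are still orthonormal and genuinely contribute two (zero) eigenvalues, which matches the convention that $A_{sl(G)}$ has $2n$ eigenvalues counted with multiplicity; this is automatically handled by the orthonormality verification above.
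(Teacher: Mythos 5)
Your proof is correct but follows a genuinely different route from the paper's. The paper argues indirectly: it first invokes the fact that the spectrum of a bipartite graph's normalized adjacency matrix is symmetric about $0$, then squares $A_{sl(G)}$ to obtain a block-diagonal matrix $\diag(A A^{\top}, A^{\top}A)$ and reads off the squared eigenvalues as $\{\sigma_i(A)^2\}$. You instead take the Jordan--Wielandt route of explicitly constructing $2n$ orthonormal eigenvectors $x_i^{\pm} = \tfrac{1}{\sqrt{2}}(u_i, \pm v_i)$ from an SVD $A_G = U\Sigma V^{\top}$ and checking that they span $\R^{2n}$. Both are standard and correct; yours is more constructive and self-contained (you never need to separately invoke the bipartite-spectrum symmetry fact or keep track of multiplicities when passing from $A^2$ back to $A$, since the orthonormal basis handles multiplicities automatically), while the paper's is slightly shorter because it can lean on two pre-existing facts. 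One small notational wrinkle in your write-up: the expression $\iprod{x_i^{\pm}, x_j^{\pm}} = \tfrac12(\delta_{ij}\pm\delta_{ij})$ conflates the same-sign and opposite-sign inner products; as written it reads like it covers only the same-sign case, which always equals $\delta_{ij}$, while the cross terms $\iprod{x_i^+, x_j^-}=\tfrac12(\delta_{ij}-\delta_{ij})=0$ need the minus sign. You do acknowledge the cross terms separately, so the argument is sound, but the displayed formula could mislead a reader.
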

\begin{proof}
    Since $sl(G)$ is bipartite, the eigenvalues of $A_{sl(G)}$ are symmetric about $0$. To determine the eigenvalues of $A_{sl(G)}$ we consider its square. Recall that the eigenvalues of $A^2_{sl(G)}$ are the squares of the eigenvalues of $A_{sl(G)}$. By calculations we have, 
    \begin{center}
        \[A^2_{sl(G)} = \begin{bmatrix}
            0 & A \\
            A^T & 0
        \end{bmatrix}^2 =
       \begin{bmatrix}
            AA^T & 0 \\
            0 & A^TA
        \end{bmatrix} \].
    \end{center}
  This implies that the eigenvalues of $A^2_{sl(G)}$ are the eigenvalues of $AA^T$ and $A^TA$, but these matrices have the same eigenvalues. By definition, the eigenvalues of $AA^T$ are the squares of singular values of $A$. Since the eigenvalues of $A_{sl(G)}$ are symmetric about $0$, it follows that the eigenvalues of $A_{sl(G)}$ are $\pm\sigma_1(A_G), \pm\sigma_2(A_G), \ldots, \pm\sigma_n(A_G)$.
\end{proof}

\section{Cheeger's Inequality and Our Singular-Value Analogue}\label{sec_sv_cheeger}

\subsection{Cheeger's Inequality}
A basic fact in spectral graph theory is that for an undirected graph $G$ it holds that $\mu_2 = 1$ if and only if $G$ is disconnected. Informally, Cheeger's inequality shows that the second-largest eigenvalue of the normalized adjacency matrix of an undirected graph provides a robust measure of connectivity. It is a foundational result in theoretical computer science and discrete mathematics with many applications such as bounding mixing time of markov chains and spectral partitioning of graphs. In particular, $1-\mu_2$ is closely related to a combinatorial quantity called conductance.

\begin{definition}\label{def: conductance}
Let $G = (V,E)$ be an Eulerian digraph and $S\subseteq V$. The \textbf{conductance} of $S$ denoted $\phi(S)$ is defined by
   \[
\phi(S) = \frac{e(S, S^c)}{\vol(S)}.
\]
The \textbf{conductance} of $G$ is $\phi(G)$ defined by
   \[
\phi(G) = \min_{\substack{\emptyset \neq S\subseteq V;\\ \vol(S)\leq \vol(V)/2}}\phi(S).
\]
\end{definition}

Note that we let $G'$ is the ``undirectification" of $G$ (i.e. $G'$ has edge weights $w'(u,v) = ((w(u,v)+w(v,u))/2$), then $\phi(G') = \phi(G)$ for all $S$, so conductance does not capture directed information about $G$. Here we give a formal statement of Cheeger's inequality. 
\begin{theorem}[Cheeger's Inequality, cf. \cite{AlonEigenvaluesExpanders}]\label{thm:cheeger}
Let $G$ be an undirected graph and $1 = \mu_1 \geq \mu_2 \geq \ldots \geq \mu_n\geq -1$ be the eigenvalues of its normalized adjacency matrix $A_G$. Then,
\[
        \frac{1-\mu_2}{2}\leq \phi(G)\leq \sqrt{2(1-\mu_2)}
\]
\end{theorem}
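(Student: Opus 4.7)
The plan is to prove the two inequalities separately via the classical spectral-partitioning toolkit. I will work throughout with the unnormalized Laplacian $L = D - M$, whose Rayleigh quotient on vectors $D$-orthogonal to the degree vector captures $1-\mu_2$ via the identity $1-\mu_2 = \min_{x\,:\,\sum_u d(u) x_u = 0} \frac{x^\top L x}{x^\top D x}$, obtained from the standard variational formula for $\mu_2(A)$ after the change of variable $x = D^{-1/2} v$.

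For the easy direction $\tfrac{1-\mu_2}{2} \leq \phi(G)$, I take a set $S$ with $\vol(S) \leq \vol(V)/2$ achieving $\phi(G)$ and plug in the canonical test vector $x_u = 1/\vol(S)$ for $u \in S$ and $x_u = -1/\vol(S^c)$ for $u \notin S$. This vector is $D$-orthogonal to $\1$, and a direct calculation yields $x^\top L x = e(S,S^c)\bigl(\tfrac{1}{\vol(S)} + \tfrac{1}{\vol(S^c)}\bigr)^2$ and $x^\top D x = \tfrac{1}{\vol(S)} + \tfrac{1}{\vol(S^c)}$, so the Rayleigh quotient equals $e(S,S^c)\bigl(\tfrac{1}{\vol(S)} + \tfrac{1}{\vol(S^c)}\bigr) \leq 2\phi(S) = 2\phi(G)$.

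For the hard direction $\phi(G) \leq \sqrt{2(1-\mu_2)}$, the plan is to extract a sparse cut from the second eigenvector by a sweep-cut rounding. Let $v$ be a unit eigenvector of $A$ for $\mu_2$ and set $y = D^{-1/2}v$, so that $\sum_u d(u) y_u = 0$ and $R(y) \defeq \frac{y^\top L y}{y^\top D y} = 1-\mu_2$. Let $c$ be a weighted median of $y$ under the degree weights, and set $\tilde y = y - c\1$, so that $\supp(\tilde y^+)$ and $\supp(\tilde y^-)$ each have volume $\leq \vol(V)/2$. Since $L\1 = 0$ and $\sum_u d(u) y_u = 0$, we have $\tilde y^\top L \tilde y = y^\top L y$ and $\tilde y^\top D \tilde y = y^\top D y + c^2 \vol(V) \geq y^\top D y$, so $R(\tilde y) \leq R(y)$. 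Moreover, because $M \geq 0$ and $\tilde y^\pm$ have disjoint supports, $(\tilde y^+)^\top L \tilde y^- = -(\tilde y^+)^\top M \tilde y^- \leq 0$; expanding $\tilde y^\top L \tilde y$ then gives $\tilde y^\top L \tilde y \geq (\tilde y^+)^\top L \tilde y^+ + (\tilde y^-)^\top L \tilde y^-$, so $R(\tilde y)$ is at least a weighted average of $R(\tilde y^+)$ and $R(\tilde y^-)$. Hence at least one of them, call it $w$, satisfies both $R(w) \leq 1-\mu_2$ and $\vol(\supp(w)) \leq \vol(V)/2$.

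The technical heart is the level-set rounding applied to this nonnegative vector $w$. I sample a threshold $t$ uniformly in $[0, \max_u w_u^2]$ and let $S_t = \{u : w_u^2 > t\}$. Direct expectation calculations give $\E[e(S_t,S_t^c)] \propto \sum_{\{u,v\}}w_G(u,v)|w_u^2 - w_v^2|$ and $\E[\vol(S_t)] \propto w^\top D w$, and applying Cauchy--Schwarz to the factorization $|w_u^2 - w_v^2| = |w_u - w_v|\cdot(w_u + w_v)$ bounds the numerator by $\sqrt{w^\top L w}\cdot\sqrt{2\, w^\top D w}$. Consequently some deterministic threshold $t^\ast$ achieves $\phi(S_{t^\ast}) \leq \sqrt{2\, R(w)} \leq \sqrt{2(1-\mu_2)}$. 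The most delicate step is the shift-and-split: one must pick the translation to be the weighted median (and not any other quantile) so that \emph{both} sides have volume $\leq \vol(V)/2$, after which the sign inequality $(\tilde y^+)^\top L \tilde y^- \leq 0$ is exactly what lets one discard the cross term and conclude that the better half inherits a Rayleigh quotient no larger than $1-\mu_2$.
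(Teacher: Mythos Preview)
The paper does not give its own proof of this theorem; it is quoted as a classical result (with a citation to Alon) and invoked as a black box in the proof of the singular-value Cheeger inequality. Your argument is the standard textbook proof and is correct in all details: the cut-indicator test vector for the easy direction, and the shift-by-weighted-median, split-by-sign, sweep-cut-with-Cauchy--Schwarz pipeline for the hard direction.
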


\subsection{Our Singular Value Cheeger Inequality}
In this section, we prove a Cheeger-like inequality for $\sigma_2$, the second-largest singular value of the normalized adjacency matrix, for weighted Eulerian digraphs. In particular, we show that $\sigma_2$ robustly captures how close a graph is to having two sets $S,T$ such that all edges leaving $S$ enter $T$ and all edges entering $T$ come from $S$. \par For undirected graphs, note that $\sigma_2 = \max\{\mu_2, |\mu_n|\} = \min\{1-\mu_2, 1+\mu_n\}$. Thus, $\sigma_2$ unifies the spectral quantites arising in Theorem~\ref{thm:cheeger} and Theorem~\ref{thm:bipartite_cheeger}. 

First, we introduce our \emph{directed} notion of conductance and state the result formally.

\begin{definition}\label{def:phisl}
    Let $G = (V,E)$ be an Eulerian digraph and $S,T\subseteq V$. The \textbf{directed conductance} of $S$ and $T$ denoted $\phid(S,T)$ is defined by
   \[
\phid(S,T) = \frac{e(S, T^c) + e(S^c, T)}{\vol(S)+\vol(T)}.
\]
The \textbf{directed conductance} of $G$ is $\phid(G)$ defined by
   \[
\phid(G) = \min_{\substack{S,T\subseteq V; \\ 0<\vol(S)+\vol(T)\leq \vol(V)}}\frac{e(S, T^c) + e(S^c, T)}{\vol(S)+\vol(T)}.
\]
\end{definition}

See Figure~\ref{fig:phidir} for a visualization of $\phid(S,T)$. Note that for undirected graphs $\phi(S) = \phid(S,S)$.

Our Cheeger inequality is the following.

\begin{theorem}\label{thm:di-cheeger}
    Let $G$ be an Eulerian digraph and $\sigma_2$ the second-largest singular value of its normalized adjacency matrix $A_G$. Then,
\[
    \frac{1-\sigma_2}{2}\leq \phid(G) \leq \sqrt{2(1-\sigma_2)}.
\]
\end{theorem}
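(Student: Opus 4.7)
The plan is to reduce \cref{thm:di-cheeger} to the ordinary undirected Cheeger inequality (\cref{thm:cheeger}) applied to the symmetric lift $sl(G)$. By \cref{lem:eig_vals_sl}, the eigenvalues of $A_{sl(G)}$ are $\pm\sigma_1(G),\ldots,\pm\sigma_n(G)$, and because $\sigma_1(G)=1$ and $\sigma_i(G)\in[0,1]$ for every $i$, the second-largest eigenvalue of $A_{sl(G)}$ is $\mu_2(sl(G))=\sigma_2(G)$. The spectral quantities on either side of the target inequality therefore already match those in Cheeger's inequality for the undirected graph $sl(G)$, so the whole theorem reduces to the purely combinatorial identity $\phi(sl(G))=\phid(G)$.

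To prove that identity I would exploit that $sl(G)$ is bipartite. Any subset $A\subseteq V(sl(G))$ splits uniquely as $A=(S\times\{L\})\cup(T\times\{R\})$ for some $S,T\subseteq V$, which sets up a bijection between subsets of $V(sl(G))$ and pairs $(S,T)$ of subsets of $V$. Since every edge of $sl(G)$ goes between the $L$- and $R$-sides, the cut $e_{sl(G)}(A,A^c)$ consists exactly of the edges from $S\times\{L\}$ to $T^c\times\{R\}$ and those from $S^c\times\{L\}$ to $T\times\{R\}$, so by the definition of the symmetric lift $e_{sl(G)}(A,A^c)=e_G(S,T^c)+e_G(S^c,T)$. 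Next, the Eulerian property of $G$ gives $d_{sl(G)}((v,L))=d^+(v)=d(v)$ and $d_{sl(G)}((v,R))=d^-(v)=d(v)$, so $\vol_{sl(G)}(A)=\vol_G(S)+\vol_G(T)$ and $\vol_{sl(G)}(V(sl(G)))=2\vol_G(V)$. Consequently, the Cheeger constraint $\vol_{sl(G)}(A)\leq \vol_{sl(G)}(V(sl(G)))/2$ translates exactly to the constraint $\vol_G(S)+\vol_G(T)\leq \vol_G(V)$ of \cref{def:phisl}, and under the correspondence $\phi_{sl(G)}(A)=\phid(S,T)$. Taking the minimum over both sides of the bijection yields $\phi(sl(G))=\phid(G)$.

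Combining these ingredients, \cref{thm:cheeger} applied to the undirected graph $sl(G)$ gives $\tfrac{1-\sigma_2(G)}{2}\leq \phid(G)\leq \sqrt{2(1-\sigma_2(G))}$, which is the claim. Once one has the definition of $\phid$ there is no conceptually hard step: the spectral input is supplied entirely by \cref{lem:eig_vals_sl}, and the combinatorial part is pure bookkeeping. The two places that demand some care are (i) matching the factor-of-two halving of volume in the Cheeger constraint with the ``no halving'' in \cref{def:phisl}, which succeeds precisely because $\vol_{sl(G)}$ double-counts relative to $\vol_G$, and (ii) using the Eulerian hypothesis to collapse $d^+(v)$ and $d^-(v)$ into a single $d(v)$, without which the $L$- and $R$-side contributions to $\vol_{sl(G)}$ would not combine into the symmetric expression $\vol_G(S)+\vol_G(T)$. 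In this sense the real work in the theorem is not in its proof but in selecting $\phid$ as a pair-based quantity in the first place, since only this pair structure is rich enough to match the bipartite cuts of $sl(G)$.
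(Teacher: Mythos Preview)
Your proposal is correct and follows essentially the same approach as the paper: apply the undirected Cheeger inequality to $sl(G)$, use \cref{lem:eig_vals_sl} to identify $\mu_2(sl(G))=\sigma_2(G)$, and establish the combinatorial identity $\phi(sl(G))=\phid(G)$ via the bijection between subsets of $V(sl(G))$ and pairs $(S,T)\subseteq V$. The paper simply packages the bookkeeping you describe into two separate lemmas (\cref{lem:relating_edges_vol_in_G_slG} and \cref{lem:sl_cond_equals_cond_sl}), but the content is identical.
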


\noindent Like Cheeger's inequality, both inequalities are tight up to constant factors, as illustrated by the hypercube with self-loops and the undirected odd cycle respectively. Note that we need $G$ to be non-bipartite, else $\sigma_2 = 1$.

We prove Theorem~\ref{thm:di-cheeger} by applying the standard Cheeger inequality to $sl(G)$. To that end, we a define the projection of a subset of vertices in $sl(G)$ onto $V(G)$ and prove a lemma relating the cuts in $sl(G)$ and edges in the graph $G$. Given a finite set $X$, let $\mathcal{P}(X)$ be the set of all subsets of $X$. For a subset $S\subseteq V(sl(G))$ define $\pi_L(S) = \{u\in V: (u,L)\in S\}$. Similarly, define $\pi_R(S) = \{u\in V: (u,R)\in S\}$. 

\begin{lemma}\label{lem:relating_edges_vol_in_G_slG}
    Let $G = (V, E)$ be an Eulerian digraph and $sl(G)$ be the symmetric lift of $G$. Then the map $\pi:\mathcal{P}(V(sl(G)))\to \mathcal{P}(V)^2$ defined by $\pi(S) = (\pi_{L}(S), \pi_{R}(S))$ defines a bijection between subsets of $V(sl(G))$ and pairs of subsets of $V$ such that 
    \[\vol_{sl(G)}(S) = \vol_G(\pi_{L}(S)) + \vol_G(\pi_{R}(S)),\]
    and 
    \[e_{sl(G)}(S, S^c) = e_G(\pi_{L}(S), \pi_{R}(S)^c) + e_G(\pi_{L}(S)^c, \pi_{R}(S)).\]
\end{lemma}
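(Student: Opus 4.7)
The plan is to prove all three claims by direct unpacking of the definitions of $sl(G)$, $\pi_L$, and $\pi_R$; this is a bookkeeping lemma rather than something requiring a clever idea, and there is no real obstacle, only a need to be careful with the directions of edges.

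For bijectivity, I would exhibit an explicit inverse: given a pair $(A,B) \in \mathcal{P}(V)^2$, the subset $S_{A,B} = (A \times \{L\}) \cup (B \times \{R\})$ of $V(sl(G))$ clearly satisfies $\pi_L(S_{A,B}) = A$ and $\pi_R(S_{A,B}) = B$, and conversely any $S \subseteq V(sl(G))$ is recovered from $(\pi_L(S),\pi_R(S))$ in this way since $V(sl(G)) = V \times \{L,R\}$ is partitioned by the $L$-copy and $R$-copy of $V$.

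For the volume identity, I would first observe that by the definition of $sl(G)$, the degree of $(v,L)$ in $sl(G)$ equals $\sum_{u \in V} w_G(v,u) = d_G^+(v)$, and the degree of $(v,R)$ equals $\sum_{u \in V} w_G(u,v) = d_G^-(v)$. Since $G$ is Eulerian, both equal $d_G(v)$. Splitting $S$ into its $L$-part and $R$-part and summing degrees then gives
\[
\vol_{sl(G)}(S) \;=\; \sum_{v \in \pi_L(S)} d_G(v) \;+\; \sum_{v \in \pi_R(S)} d_G(v) \;=\; \vol_G(\pi_L(S)) + \vol_G(\pi_R(S)).
\]

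For the cut identity, I would observe that since $sl(G)$ is bipartite with bipartition $V \times \{L\}$ versus $V \times \{R\}$, every edge of $sl(G)$ has one endpoint of the form $(u,L)$ and one of the form $(v,R)$, with weight $w_G(u,v)$. Such an edge lies across the cut $(S, S^c)$ in exactly one of two disjoint cases: either $(u,L) \in S$ and $(v,R) \notin S$, i.e. $u \in \pi_L(S)$ and $v \in \pi_R(S)^c$; or $(u,L) \notin S$ and $(v,R) \in S$, i.e. $u \in \pi_L(S)^c$ and $v \in \pi_R(S)$. Summing the weights over each of these two cases yields $e_G(\pi_L(S), \pi_R(S)^c)$ and $e_G(\pi_L(S)^c, \pi_R(S))$ respectively, so adding them gives the claimed formula. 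The only subtlety worth flagging is being careful that $e_G$ on an ordered pair $(A,B)$ uses edges from $A$ to $B$ in the directed sense, which matches up precisely because left copies always correspond to the tail and right copies to the head of a directed edge of $G$.
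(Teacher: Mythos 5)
Your proposal is correct and follows essentially the same approach as the paper's proof: splitting $S$ into its $L$ and $R$ parts, computing degrees of lifted vertices via $d_G^+$ and $d_G^-$ and invoking Eulerianness, and using the bipartite structure of $sl(G)$ to decompose the cut into the two directed edge counts. The only difference is that you write out the explicit inverse for bijectivity while the paper dismisses it as ``by inspection,'' which is a harmless elaboration.
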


\begin{proof}
  Let $G = (V,E)$ be an Eulerian digraph, $sl(G)$ be the symmetric lift of $G$ and $S\subseteq V(sl(G))$. By inspection, $\pi$ is a bijection.
  Now, we prove $\vol_{sl(G)}(S) = \vol_G(\pi_{L}(S_L)) + \vol_G(\pi_{R}(S_R))$. Recall, that $\vol_{sl(G)}(S) = \sum_{x\in S}d_{sl(G)}(x)$. Since $sl(G)$ is bipartite $S\cap L$ and $ S\cap R$ form a partition of $S$. Observe that for $x\in L$ we have $d_{sl(G)}(x) = d^+_{G}(\pi_{L}(x))$ and for $x\in R$ we have $d_{sl(G)}(x) = d^-_{G}(\pi_{R}(x))$. This allows us to write 
  \begin{align*}
      \vol_{sl(G)}(S) &= \sum_{x\in S\cap L}d_{sl(G)}(x) + \sum_{x\in S\cap R}d_{sl(G)}(x)\\
      &=\sum_{x\in S\cap L}d^+_{G}(x) + \sum_{x\in S\cap R}d^-_{G}(x)\\
      &= \vol_G(\pi_{L}(S\cap L)) + \vol_G(\pi_{R}(S\cap R)),
  \end{align*}
  where the last line follows from the definition of volume and $G$ being Eulerian.
  It remains to prove $e_{sl(G)}(S, S^c) = e_G(\pi_{L}(S), \pi_{R}(S)^c) + e_G(\pi_{L}(S)^c, \pi_{R}(S)).$  Since $sl(G)$ is bipartite, 
  \begin{align*}
  e_{sl(G)}(S, S^c) &= e_{sl(G}(S\cap L, R\setminus (R\cap S)) + e_{sl(G}(L\setminus (S\cap L), R\cap S)\\
  &= e_G(\pi_{L}(S), \pi_{R}(S)^c) + e_G(\pi_{L}(S)^c, \pi_{R}(S)),
  \end{align*}
  as desired.
\end{proof}

\begin{figure}[h]
    \centering
    \includegraphics[width=0.5\linewidth]{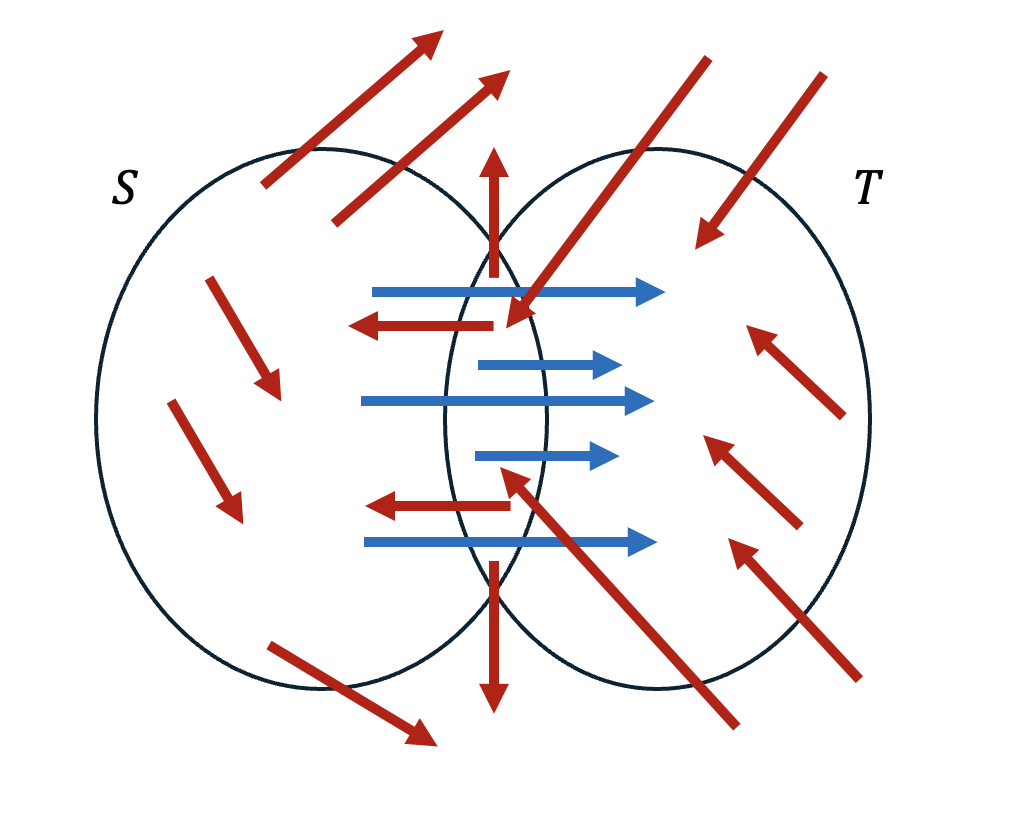}
    \caption{$\phid(S) = \frac{\text{red}}{\text{red} + 2\text{blue}}$}
    \label{fig:phidir}
\end{figure}

Now we prove a lemma relating the directed conductance of $G$ and the conductance of $sl(G)$. 
\begin{lemma}\label{lem:sl_cond_equals_cond_sl}
    Let $G = (V, E)$ be an Eulerian digraph and $sl(G)$ be the symmetric lift of $G$. Then $\phi(sl(G)) = \phid(G)$.
\end{lemma}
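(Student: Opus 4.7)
My plan is to derive this directly from Lemma~\ref{lem:relating_edges_vol_in_G_slG}, which already does all the combinatorial work: the bijection $\pi$ it provides matches, term-by-term, the numerator and denominator of the ratio defining the conductance of a set $S$ in $sl(G)$ with those of $\phid(\pi_L(S), \pi_R(S))$, so the two minimization problems coincide once one checks that their feasible regions agree under $\pi$.

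First, I would fix an arbitrary $S \subseteq V(sl(G))$ and apply Lemma~\ref{lem:relating_edges_vol_in_G_slG} to write
\[
\frac{e_{sl(G)}(S, S^c)}{\vol_{sl(G)}(S)}
= \frac{e_G(\pi_L(S), \pi_R(S)^c) + e_G(\pi_L(S)^c, \pi_R(S))}{\vol_G(\pi_L(S)) + \vol_G(\pi_R(S))}
= \phid(\pi_L(S), \pi_R(S)).
\]
Next, I would translate the feasibility constraint. Because $G$ is Eulerian, each vertex $v$ contributes $d^+(v) + d^-(v) = 2d(v)$ to $\vol_{sl(G)}(V(sl(G)))$, so $\vol_{sl(G)}(V(sl(G))) = 2\vol_G(V)$. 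The cut condition $0 < \vol_{sl(G)}(S) \leq \vol_{sl(G)}(V(sl(G)))/2$ therefore becomes $0 < \vol_G(\pi_L(S)) + \vol_G(\pi_R(S)) \leq \vol_G(V)$, which is exactly the constraint appearing in the definition of $\phid(G)$.

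Since $\pi$ is a bijection (by Lemma~\ref{lem:relating_edges_vol_in_G_slG}), the feasible sets of the two minimization problems are in one-to-one correspondence, and on corresponding instances the objective values match by the identity above. Taking the minimum on each side gives $\phi(sl(G)) = \phid(G)$. There is no genuine obstacle here; the only point requiring a moment of care is the normalization $\vol_{sl(G)}(V(sl(G))) = 2\vol_G(V)$, which is precisely what makes the ``half total volume'' constraint in $\phi$ line up with the ``full total volume'' constraint in $\phid$.
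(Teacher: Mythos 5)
Your proof is correct and follows essentially the same route as the paper's: both invoke Lemma~\ref{lem:relating_edges_vol_in_G_slG} and observe that the bijection $\pi$ preserves objective value and feasibility (via $\vol_{sl(G)}(V(sl(G))) = 2\vol_G(V)$). You package this as a single bijective correspondence between the two minimization problems, whereas the paper spells out the two one-sided inequalities, but the content is identical.
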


\begin{proof}
 We show $\phid(G) = \phi(sl(G))$. We begin with the upper bound $\phid(G)\leq \phi(sl(G))$. Let $S\subseteq V(sl(G))$ be such that $\phi(sl(G)) = \phi(S)$ and $\vol(S)\leq \vol(sl(G))/2 = \vol(G)$. We claim $\phid(\pi_L(S), \pi_R(S)) = \phi(S)$. By Lemma~\ref{lem:relating_edges_vol_in_G_slG}, we have $\vol(\pi_L(S)) + \vol(\pi_R(S)) = \vol(S)\leq \vol(G)$ and  $e_G(\pi_L(S), \pi_R(S)^c) + e_G(\pi_L(S)^c, \pi_R(S)) = e_{sl(G)}(S, S^c)$. Thus,
 \[
 \phid(G) \leq \phid(\pi_L(S), \pi_R(S)) = \phi(S) = \phi(sl(G)).
 \]
    
Next, we prove $\phi(sl(G))\leq \phid(G)$. Let $S_L,S_R\subseteq V$ with $\vol(S_L) + \vol(S_R)\leq \vol(G) = \vol(sl(G))/2$ be such that $\phid(S_L, S_R) = \phid(G)$. Define $S = \pi^{-1}(S_L,S_R)$. By Lemma~\ref{lem:relating_edges_vol_in_G_slG}, $\vol(S) =\vol(S_L) + \vol(S_R) \leq \vol(G)$ and $e_G(S_L, S_R^c) + e_G(S_L^c, S_R) = e_{sl(G)}(S, S^c)$. Hence, 
\[
\phi(sl(G))\leq \phi(S) = \phid(S_L, S_R) = \phid(G).
\]
\end{proof}

Combining Theorem~\ref{thm:cheeger} and Lemma~\ref{lem:sl_cond_equals_cond_sl} we get a simple proof of Theorem~\ref{thm:di-cheeger}.

\begin{proof}[Proof of Theorem \ref{thm:di-cheeger}]
    Let $G = (V,E)$ be an Eulerian digraph and $sl(G)$ be th symmetric lift of $G$. By applying Theorem~\ref{thm:cheeger} to $sl(G)$ and using Lemma~\ref{lem:eig_vals_sl}, we have
    \[
    \frac{1-\sigma_2}{2} \leq \phi(sl(G))\leq \sqrt{2(1-\sigma_2)}.
    \]
    By Lemma~\ref{lem:sl_cond_equals_cond_sl}, 
    \[
    \frac{1-\sigma_2}{2} \leq \phid(G)\leq \sqrt{2(1-\sigma_2)},
    \]
    as desired.
\end{proof}

 In Theorem~\ref{thm:cheeger}, the lower bound is tight due to the hypercube and the upper bound is tight due to the cycle graph. For both Theorem~\ref{thm:di-cheeger} we can consider the graphs whose symmetric lift are the hypercube and cycle graph respectively. 

We end this section with a result showing that for regular graphs, in $\phid(G)$ the sets $S,T$ can be made to have the same size with a constant factor loss in $\phid(G)$.
\begin{definition}
   Let $G= (V,E)$ be an Eulerian digraph. We define the \textbf{balanced directed conductance} by
   \[
\phi^=_{dir}(G) = \min_{\substack{S,T\subseteq V; \\  |S|=|T|}}\phid(S,T).
\]
\end{definition}

\begin{proposition}\label{prop:balanced_sv_cheeger}
    Let $G$ be a regular digraph. Then $$\phid(G)\leq \phi^{=}_{dir}(G)\leq 2\phid(G)/(1-\phid(G)).$$
\end{proposition}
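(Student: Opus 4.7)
The plan is to prove the two inequalities separately. The first, $\phid(G)\le\phi^=_{dir}(G)$, is immediate once we note the natural implicit restriction $0<|S|=|T|\le n/2$ (without which $\phi^=_{dir}$ would be trivially $0$, taking $S=T=V$): any such pair satisfies $\vol(S)+\vol(T)=2d|S|\le\vol(V)$ since $G$ is $d$-regular, so it is feasible for $\phid(G)$.

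For the upper bound, let $(S,T)$ be an optimal pair for $\phid(G)$ and write $s=|S|$, $t=|T|$, $\phi=\phid(G)$. The conceptual core of my proof is a ``near-balance'' lemma: the optimal pair is already approximately balanced in size. Using $d$-regularity, $e(S,T^c)=ds-e(S,T)$ and $e(S^c,T)=dt-e(S,T)$, so
\[
e(S,T^c)+e(S^c,T) = d(s+t)-2e(S,T),
\]
which yields $e(S,T)=d(s+t)(1-\phi)/2$. Plugging this into $e(S^c,T)=dt-e(S,T)\ge0$ forces
\[
|s-t|\le (s+t)\phi.
\]
I will establish this near-balance inequality first, since it drives the whole argument.

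Assuming WLOG $s\ge t$, I will now rebalance by deleting $q=s-t$ carefully chosen vertices from $S$. Let $R\subseteq S$ be the $q$ vertices with the fewest weighted out-edges into $T$; by averaging, $e(R,T)\le (q/s)\cdot e(S,T)$. Setting $S'=S\setminus R$ and $T'=T$, so that $|S'|=|T'|=t$, I expand $e(S',(T')^c)=e(S,T^c)-e(R,T^c)$ and $e((S')^c,T')=e(S^c,T)+e(R,T)$, then use $e(R,T)+e(R,T^c)=dq$ to get
\[
\phid(S',T') = \frac{d(s+t)\phi + 2e(R,T) - dq}{2dt}.
\]
Substituting the averaging bound for $e(R,T)$ and simplifying (the factor of $t$ cancels cleanly) reduces to $\phid(S',T')\le ((s+t)\phi+(s-t))/(2s)$. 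Applying near-balance gives $\phid(S',T')\le (s+t)\phi/s$, and since $t\le s$ we have $(s+t)/s\le 2$, so $\phid(S',T')\le 2\phi\le 2\phi/(1-\phi)$.

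The main step is the near-balance inequality; everything afterward is mechanical averaging and algebra, and no heavy machinery (e.g., spectral arguments or the symmetric lift) is required. The only technical point worth flagging is feasibility of $(S',T')$ for $\phi^=_{dir}(G)$, which holds because $2t\le s+t\le n$, so $|S'|=|T'|=t\le n/2$.
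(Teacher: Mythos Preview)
Your proof is correct. Both your argument and the paper's hinge on the same near-balance observation $|s-t|\le(s+t)\phid(G)$ (the paper derives it from $d(|S|-|T|)\le e(S,T^c)\le \phid(S,T)\,d(|S|+|T|)$, which is the same computation in different packaging). The difference is in the rebalancing step: the paper removes an \emph{arbitrary} set of $|S|-|T|$ vertices from $S$ and bounds crudely via $e(S',T^c)+e((S')^c,T)\le e(S,T^c)+e(S^c,T)+d\,|S\setminus S'|$, while the denominator drops to $(1-\phi)\,d(|S|+|T|)$, yielding exactly $2\phi/(1-\phi)$. Your averaging choice of $R$ (the $q$ vertices of $S$ with least out-weight into $T$) is more careful, and your algebra in fact delivers the strictly stronger bound $\phi^{=}_{dir}(G)\le 2\phid(G)$, from which the stated inequality follows trivially since $1-\phi\le 1$. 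So your route is a modest but genuine sharpening of the paper's argument. Your remark about the implicit constraint $0<|S|=|T|\le n/2$ in the definition of $\phi^{=}_{dir}$ is also well taken; without it the quantity is vacuously $0$.
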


\begin{proof}
    Let $G$ be a $d$-regular digraph. The inequality $\phid(G)\leq \phi^{=}_{dir}(G)$ holds since $\phid(G)$ is minimizing the same quantity as $\phi^{=}_{dir}(G)$ over a larger collection of sets $S,T$.
    \par It remains to prove $\phi^{=}_{dir}(G)\leq 2\phid(G)/(1-\phid(G))$. Let $S, T\subseteq V(G)$ such that $\phid(S, T) = \phid(G)$. If $|S| = |T|$ we are done, so without loss of generality assume $\left|S\right| > \left|T\right|$. Let $S'$ be the set obtained from $S$ removing an arbitrary subset of $\left|S\right| - \left|T\right|\leq \phid(S, T)(|S|+|T|)$ vertices from $S$. We show $\phid(S', T)\leq 2\phid(S,T)/(1-\phid(S,T))$. We have $ d(|S|-|T|)\leq e(S, T^c)$ and   $e(S,T^c) \leq \phid(S,T)d(|S|+|T|)$. Thus, 
    \[
    |S-S'|=|S|-|T|\leq \phid(S,T)(|S|+|T|).
    \]

    Observe,
    \begin{align*}
        e(S', T^c) + e((S')^c, T) &= e(S, T^c)+e(S^c, T)-e(S\setminus S', T^c)+e(S\setminus S', T)\\
        &\leq e(S, T^c)+e(S^c, T)+e(S\setminus S', T)\\
        &\leq e(S, T^c)+e(S^c, T) + d\cdot|S\setminus S'|\\
        &\leq 2d\cdot\phid(S,T)(|S|+|T|)
    \end{align*}
    
    We also have 
    \begin{align*}
        \vol(S')+\vol(T) &= d(\left|S\right|+|T|-|S\setminus S'|)\\
        &\geq (1-\phid(S, T))\cdot d(\left|S\right|+\left|T\right|).
    \end{align*}
    Putting this together gives us $\phid(S', T) \leq 2\phid(S)/(1-\phid(S))$.
\end{proof}

\section{Relationship to Bipartiteness Cheeger}\label{sec:relation_to_bipartiteness_cheeger}

\subsection{Trevisan's Bipartiteness Cheeger}
Trevisan \cite{TrevisanMaxCut} proves a Cheeger inequality for $\mu_n$ and by way of this result, he designed a new efficient approximation algorithm for the max-cut problem. 

\begin{definition}[\cite{TrevisanMaxCut}]
    Let $G = (V,E)$ be an undirected graph. For $y\in\{-1,0,1\}^{n}$ the \textbf{bipartiteness ratio} of $y$ is defined by $$\beta(y) = \frac{\sum_{u,v}w_{u,v}\left|y_u+y_v\right|}{2\sum_{v}d_v\left|y_v\right|}$$. The bipartiteness ratio of $G$ is defined by 
    \[
        \beta(G) = \min_{y\in\{-1,0,1\}^n- 0^n}\beta(y).
    \]
\end{definition}

Let $S = \{v\in V: y_v \neq 0\}, A = \{v\in V: y_v = 1\}$ and $B = \{v\in V: y_v = -1\}$. The quantity $\beta(y)$ captures the fraction of edges incident to $S$ that must be removed to make $S$ bipartite with bipartition $(A,B)$, as demonstrated by the following lemma.

\begin{lemma}[\cite{TrevisanMaxCut}]\label{lem:bipartite_ratio}
    Let $G = (V,E)$ be an undirected graph. For every $y\in \{-1,0,1\}^n$, define $S = \{v\in V: y_v \neq 0\}, A = \{v\in V: y_v = 1\}$ and $B = \{v\in V: y_v = -1\}$. Then,
    \[\beta(y) = \frac{e(A)+e(B)+e(S, S^c)}{\vol(S)}\]
\end{lemma}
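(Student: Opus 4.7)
The plan is to verify the identity by direct expansion: I will separately rewrite the numerator and denominator of $\beta(y)$ in terms of the edge-count quantities $e(A)$, $e(B)$, and $e(S,S^c)$ by casework on the pair $(y_u, y_v)$, and then divide.

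For the denominator, since $|y_v| = 1$ exactly when $v \in A \cup B = S$ and $|y_v| = 0$ otherwise, we immediately get $2\sum_{v} d_v |y_v| = 2\vol(S)$. For the numerator $\sum_{u,v} w_{u,v}|y_u + y_v|$, I would partition the ordered pairs $(u,v) \in V^2$ according to which of $A$, $B$, $S^c$ contain $u$ and $v$. The quantity $|y_u+y_v|$ equals $2$ when both endpoints lie in $A$ or both in $B$; equals $1$ when exactly one of $u, v$ lies in $S$; and equals $0$ otherwise (both in $S^c$, or one in $A$ and the other in $B$). Using the paper's convention $e_G(X, X) = \sum_{(u,v) \in X \times X} w(u,v)$, which double-counts non-self-loops inside $X$, the contributions of the first two cases sum to $2e(A) + 2e(B)$. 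Since $G$ is undirected, $e_G(S, S^c) = e_G(S^c, S)$, so the mixed contributions give $2e(S, S^c)$. Altogether the numerator equals $2\bigl(e(A) + e(B) + e(S, S^c)\bigr)$, and dividing by $2\vol(S)$ yields the claimed formula.

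There is no real obstacle here; the argument is pure bookkeeping. The only subtlety worth flagging is making sure the factor of $2$ produced by $|y_u+y_v|=2$ on pairs inside $A$ or $B$ matches the built-in double counting of $e(A)$ and $e(B)$ under the paper's convention, so that the $2$'s cancel cleanly between numerator and denominator.
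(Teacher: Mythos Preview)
Your proposal is correct and follows essentially the same approach as the paper: both compute the denominator via $\sum_v d_v|y_v|=\vol(S)$ and expand the numerator by casework on which of $A$, $B$, $S^c$ contain $u$ and $v$, using the factor-of-$2$ from $|y_u+y_v|=2$ together with the double-counting convention for $e(A)$ and $e(B)$ and the symmetry $e(S,S^c)=e(S^c,S)$. There is no meaningful difference between the two arguments.
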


\begin{proof}
    We prove the lemma by showing $\sum_{v}d_v|y_v| = \vol(S)$ and $\sum_{u,v}w_{u,v}|y_u+y_v|/2 = e(A)+e(B)+e(S, S^c)$. The first equality follows from the definition of $S$ and $\vol(S)$. For the second we have
    \begin{align*}
      \frac{1}{2}\sum_{u,v}w_{u,v}|y_u+y_v| &= \frac{1}{2}\sum_{u,v\in A}2w_{u,v} + \frac{1}{2}\sum_{u,v\in B}\sum_{v\in B}2w_{u,v} + \frac{1}{2}\sum_{u\in S, v\in S^c}w_{u,v} + \frac{1}{2}\sum_{u\in S^c, v\in S}w_{u,v}\\
      &= e(A) + e(B) + e(S, S^c).
    \end{align*}
\end{proof}

Trevisan establishes the following quantitative relationship between $\mu_n$ and $\beta(G)$.

\begin{theorem}[Bipartiteness Cheeger, \cite{TrevisanMaxCut}]\label{thm:bipartite_cheeger}
Let $G$ be a $d$-regular graph and $1 = \mu_1 \geq \mu_2 \geq \ldots \geq \mu_n\geq -1$ be the eigenvalues of its normalized adjacency matrix $A_G$. Then,
\[
 \frac{1+\mu_n}{2} \leq \beta(G) \leq \sqrt{2(1+\mu_n)}.
\]
\end{theorem}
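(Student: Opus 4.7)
The plan is to prove both inequalities by adapting Cheeger's classical argument to the signless Laplacian $D_G + M_G$, whose normalized quadratic form $x^\top(D_G+M_G)x / x^\top D_G x$ is minimized at $1 + \mu_n$. The lower bound is obtained by plugging $\{-1,0,1\}$-valued test vectors into this variational characterization, while the upper bound uses a \emph{signed} sweep rounding of a $\mu_n$-eigenvector.

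For the lower bound $(1+\mu_n)/2 \leq \beta(G)$, I would use that for $d$-regular $G$, $d(I + A_G) = D_G + M_G$, so the variational characterization of $1+\mu_n$ gives, for any $y \in \{-1,0,1\}^n \setminus \{0\}$,
\[
(1+\mu_n)\cdot d\|y\|^2 \leq \sum_{\{u,v\}} w_{u,v}(y_u+y_v)^2.
\]
A short case analysis on $y_u, y_v \in \{-1,0,1\}$ (so that $(y_u+y_v)^2 \in \{0,1,4\}$) evaluates the right-hand side as $2(e(A)+e(B)) + e(S,S^c)$, where $A,B,S$ are as in Lemma~\ref{lem:bipartite_ratio}. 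Bounding this by $2(e(A)+e(B)+e(S,S^c)) = 2\beta(y)\vol(S)$ and using $d\|y\|^2 = \vol(S)$, I obtain $1+\mu_n \leq 2\beta(y)$; minimizing over $y$ finishes this direction.

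For the upper bound $\beta(G) \leq \sqrt{2(1+\mu_n)}$, I would take a $\mu_n$-eigenvector $x$ of $A_G$, rescaled so $\|x\|_\infty = 1$, and apply a signed threshold rounding: for $t$ uniform in $[0,1]$, set $y^{(t)}_v = \sign(x_v)$ if $x_v^2 > t$ and $y^{(t)}_v = 0$ otherwise. Let $S_t = \{v : y^{(t)}_v \neq 0\}$. Two expected quantities drive the argument. First, $\mathbb{E}_t[|y^{(t)}_v|] = \Pr[x_v^2 > t] = x_v^2$, giving $\mathbb{E}_t[\vol(S_t)] = x^\top D_G x$. Second, a per-edge case analysis on the signs of $x_u, x_v$ yields
\[
\mathbb{E}_t\!\left[|y^{(t)}_u + y^{(t)}_v|\right] \leq |x_u+x_v|\cdot(|x_u|+|x_v|).
\]
Summing over edges and using Cauchy-Schwarz, the right-hand side becomes at most $\sqrt{\sum w_{u,v}(x_u+x_v)^2} \cdot \sqrt{\sum w_{u,v}(|x_u|+|x_v|)^2}$. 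The first factor equals $\sqrt{x^\top(D_G+M_G)x} = \sqrt{(1+\mu_n)\, x^\top D_G x}$; for the second, $(|x_u|+|x_v|)^2 \leq 2(x_u^2+x_v^2)$ gives at most $\sqrt{2\, x^\top D_G x}$. Multiplying produces
\[
\mathbb{E}_t\!\left[\sum_{\{u,v\}} w_{u,v}|y^{(t)}_u + y^{(t)}_v|\right] \leq \sqrt{2(1+\mu_n)}\cdot \mathbb{E}_t[\vol(S_t)],
\]
and the left-hand side equals $\mathbb{E}_t[\beta(y^{(t)})\vol(S_t)]$ by Lemma~\ref{lem:bipartite_ratio}. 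A standard averaging argument then produces a specific $t^\star$ with $\beta(y^{(t^\star)}) \leq \sqrt{2(1+\mu_n)}$.

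The main obstacle will be the per-edge inequality $\mathbb{E}_t[|y^{(t)}_u+y^{(t)}_v|] \leq |x_u+x_v|(|x_u|+|x_v|)$: the case analysis must simultaneously handle same-sign, opposite-sign, and zero-coordinate edges, and the fact that it is actually an \emph{equality} in the opposite-sign case is what lets the tight constant $\sqrt{2}$ fall out purely from Cauchy-Schwarz combined with $(a+b)^2 \leq 2(a^2+b^2)$, with no further slack.
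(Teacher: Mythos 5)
The paper states this theorem as Trevisan's result and cites \cite{TrevisanMaxCut} without reproducing a proof, so there is no in-paper argument to compare against. Your proposal is a correct proof, and it is essentially Trevisan's original argument (the signless analogue of the discrete Cheeger proof). For the lower bound, the identity $\sum_{\{u,v\}} w_{u,v}(y_u+y_v)^2 = y^\top(D_G+M_G)y$, the variational characterization $y^\top(D_G+M_G)y \geq (1+\mu_n)\,y^\top D_G y$ via $D_G+M_G = d(I+A_G)$, and the evaluation $2(e(A)+e(B)) + e(S,S^c) \leq 2\beta(y)\vol(S)$ are all correct; indeed $(y_u+y_v)^2 \in \{0,1,4\}$ gives exactly $2(e(A)+e(B))+e(S,S^c)$ once one accounts for the paper's convention that $e(A)$ double-counts internal edges. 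For the upper bound, the per-edge inequality $\E_t\bigl[|y_u^{(t)}+y_v^{(t)}|\bigr] \leq |x_u+x_v|(|x_u|+|x_v|)$ checks out in each sign case (with equality when $x_u,x_v$ have opposite signs and when one is zero, and with slack $2|x_u x_v|$ in the same-sign case), the Cauchy--Schwarz split and the bound $(|x_u|+|x_v|)^2 \leq 2(x_u^2+x_v^2)$ give the factor $\sqrt{2(1+\mu_n)}$, and the averaging step is valid because $\Pr_t[\vol(S_t)>0]=1$ after rescaling $\|x\|_\infty = 1$. In short: no gap, and this is the standard proof from the cited source rather than an alternative route.
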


\subsection{A Directed Bipartiteness Ratio}
In this section we prove that for undirected graphs, we are able to show that $\phid(G)$ essentially captures both the ordinary conductance $\phi(G)$ and the bipartite ratio $\beta(G)$. This can be viewed as a combinatorial version of the fact that $\sigma_2 = \max\{\mu_2, |\mu_n|\} = \min\{1-\mu_2, 1+\mu_n\}$. Furthermore, we show that an analogous result for directed graphs cannot hold. We begin by relating conductance and our directed conductance.

\begin{lemma}\label{lem:phi_dirphi}
    Let $G = (V,E)$ be an Eulerian digraph. For every $S\subseteq V$, $\phi(S) = \phid(S,S)$. In particular, 
    \[\phi(G) = \min_{\substack{S\subseteq V;\\  0<\vol(S)\leq \vol(V)/2}}\phid(S,S). \]
\end{lemma}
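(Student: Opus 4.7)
The plan is to prove the first equality $\phi(S) = \phid(S,S)$ by a direct unpacking of definitions, where the only substantive input is the Eulerian property; the second equality (the ``in particular'' part) then follows immediately by taking the minimum over the appropriate set of $S$.

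First I would unpack the definitions. On the one hand, $\phi(S) = e(S,S^c)/\vol(S)$. On the other hand, plugging $T = S$ into \cref{def:phisl} gives
\[
\phid(S,S) \;=\; \frac{e(S,S^c) + e(S^c,S)}{\vol(S) + \vol(S)} \;=\; \frac{e(S,S^c) + e(S^c,S)}{2\,\vol(S)}.
\]
So what needs to be shown is simply that $e(S,S^c) = e(S^c,S)$ whenever $G$ is Eulerian. This is the only real step: since the Eulerian condition says $d^+(v)=d^-(v)$ for every vertex, summing over $v \in S$ yields $\vol^+(S) = \vol^-(S)$, i.e.\ $e(S,V) = e(V,S)$. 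Splitting both sides as $e(S,S) + e(S,S^c)$ and $e(S,S) + e(S^c,S)$ respectively and cancelling gives $e(S,S^c) = e(S^c,S)$, as needed. Substituting back into the display above produces $\phid(S,S) = e(S,S^c)/\vol(S) = \phi(S)$.

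Finally, for the ``in particular'' statement, I would just observe that by \cref{def: conductance}, $\phi(G)$ is the minimum of $\phi(S)$ over nonempty $S \subseteq V$ with $\vol(S) \leq \vol(V)/2$, and by the pointwise equality just established this is the same as the minimum of $\phid(S,S)$ over the same collection. There is no real obstacle here: this lemma is essentially a definition chase, whose only content is the elementary fact that the Eulerian property at each vertex propagates to the cut-level identity $e(S,S^c) = e(S^c,S)$.
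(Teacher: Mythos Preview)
Your proof is correct and is exactly the natural definition chase; the paper in fact states this lemma without proof, so there is nothing to compare against. The only substantive step, namely the Eulerian cut identity $e(S,S^c)=e(S^c,S)$, is handled cleanly.
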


For a bipartiteness analogue of Lemma~\ref{lem:phi_dirphi}, instead of minimizing over $S=T$ as above, we minimize over $S$ disjoint from $T$.

\begin{definition}\label{def: phibp}
Let $G = (V,E)$ be an Eulerian digraph. The \textbf{directed bipartiteness ratio} of $G$ is $\bdir(G)$ defined by
   \[
\bdir(G) = \min_{\substack{S,T\subseteq V;\\ S\cap T=\emptyset; \\ 0<\vol(S)+\vol(T)\leq \vol(V)}}\phid(S,T)
\]
\end{definition}

This is a directed generalization of Trevisan's bipartiteness ratio as shown by the following lemma:

\begin{lemma}\label{lem:phibp_vs_beta}
If $G = (V,E)$ is an undirected graph, then $\bdir(G) = \beta(G)$
\end{lemma}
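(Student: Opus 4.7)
The plan is to set up a value-preserving bijection between the two minimization domains. Given a vector $y \in \{-1, 0, 1\}^n \setminus \{0^n\}$, associate the ordered pair $(S, T) = (\{v : y_v = 1\}, \{v : y_v = -1\})$; conversely, a disjoint pair $S \cap T = \emptyset$ with $\vol(S) + \vol(T) > 0$ determines $y$ by setting $y_v = 1$ on $S$, $y_v = -1$ on $T$, and $y_v = 0$ elsewhere. The constraint $\vol(S) + \vol(T) \leq \vol(V)$ in the definition of $\bdir(G)$ is automatic for disjoint $S, T \subseteq V$, so the two feasible sets really do correspond bijectively.

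The core step is then the pointwise equality $\phid(S, T) = \beta(y)$ under this correspondence. Setting $R = V \setminus (S \cup T)$, the disjointness of $S$ and $T$ gives the disjoint decompositions $T^c = S \sqcup R$ and $S^c = T \sqcup R$, so
\[
e(S, T^c) + e(S^c, T) \;=\; e(S, S) + e(T, T) + e(S, R) + e(R, T).
\]
Because $G$ is undirected, $e(R, T) = e(T, R)$. On the other side, Lemma~\ref{lem:bipartite_ratio} applied with $A = S$, $B = T$, and Trevisan's set ``$S$'' taken to be $S \cup T$ rewrites the numerator of $\beta(y)$ as $e(S, S) + e(T, T) + e(S \cup T, R)$, and $e(S \cup T, R) = e(S, R) + e(T, R)$. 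The denominators also agree, since $\vol(S \cup T) = \vol(S) + \vol(T)$. Taking minima over the matching feasible sets then gives $\bdir(G) = \beta(G)$.

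The only subtlety worth flagging, and what I would be most careful to track, is the convention from Section~\ref{sec:prelim} that $e(A, B) = \sum_{(a,b) \in A \times B} w(a, b)$: in the undirected case, this counts an edge inside a single set with weight $2$ and an edge across disjoint sets with weight $1$. Once this is pinned down, the factor of $1/2$ in Trevisan's definition of $\beta(y)$ and the identification $e(R, T) = e(T, R)$ line up precisely with the expression for $\phid(S, T)$, and the remaining verification is routine bookkeeping.
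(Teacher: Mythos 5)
Your proof is correct and follows essentially the same route as the paper: both expand $\phid(S,T)$ for disjoint $S,T$ into $e(S,S)+e(T,T)+e(S,R)+e(R,T)$ with $R = (S\cup T)^c$, invoke undirectedness to match this with the Trevisan numerator from Lemma~\ref{lem:bipartite_ratio}, and then observe the denominators and feasible sets coincide. The observation that $\vol(S)+\vol(T)\le\vol(V)$ holds automatically for disjoint $S,T$ is a nice explicit touch that the paper leaves implicit.
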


\begin{proof}
    Let $G$ be an undirected graph. Let $y\in\{-1,0,1\}$ and define $A = \{v\in V: y_v = 1\}, B = \{v\in V: y_v = -1\}$ and $S = A\cup B$. We show that $\bdir(A, B) = \beta(y)$. By Lemma~\ref{lem:bipartite_ratio}, we have $\beta(y) = (e(A)+e(B)+e(S,S^c))/\vol(S)$. Observe that we can write $\bdir(A,B)$ as follows
    \begin{align*}
        \bdir(A,B) &= \frac{e(A, B^c)+e(A^c,B)}{\vol(A)+\vol(B)}\\
        &=\frac{e(A) + e(A, S^c) + e(B) + e(S^c, B)}{\vol(S)}\\
        &= \frac{e(A)+e(B) + e(S, S^c)}{\vol(S)} &&\text{(by undirectedness)}\\
        &= \beta(y)
    \end{align*}
    Taking the minimum over all nonzero $y\in \{-1, 0 ,1\}$ gives the desired equality.
\end{proof}

\subsection{Directed Conductance vs Undirected Conductance and Bipartiteness}
Now we prove our main result of the section, namely that for undirected graphs, $\phid(G)$ is within a constant factor of $\min\{\phi(G), \bdir(G)\}$.

\begin{figure}[H]
\centering
\begin{subfigure}[t]{0.4 \textwidth}
\centering
\scalebox{.6}{
\begin{tikzpicture}[
    -,>=stealth',shorten >=1pt,
    auto,node distance=4cm,
    thick,
    main node/.style={circle,draw,font=\small\bfseries}
  ]
  \node[main node] (1) {$S\cap T$};
  \node[main node] (2) [below left  of=1] {$S\setminus T$};
  \node[main node] (3) [below right of=1] {$T\setminus S$};
  \node[main node] (4) [below       of=1] {$S^{c}\!\cap T^{c}$};

  \draw[blue] (1)--(2) (1)--(3);
  \draw[blue] (1) to [bend left] (4);

  \draw (2) to[out=200,in=150,looseness=8] (2);
  \draw (3) to[out=340,in= 30,looseness=8] (3);
  \draw (2)--(4) (3)--(4) (4)--(1);

  \begin{scope}[on background layer,dashed,thick]
    \coordinate (Sc) at ($(1)!0.5!(2)$);           
    \draw (Sc) ellipse [x radius=2.8cm,
                        y radius=1.4cm,
                        rotate= 45]               
          node at ($(Sc)+(-2.2, 1.2)$) {$S$};

    \coordinate (Tc) at ($(1)!0.5!(3)$);           
    \draw (Tc) ellipse [x radius=2.8cm,
                        y radius=1.4cm,
                        rotate=-45]               
          node at ($(Tc)+( 2.2, 1.2)$) {$T$};
  \end{scope}
\end{tikzpicture}
}
\caption{The blue edges correspond to the edges counted by $e(S\cap T, (S\cap T)^c)$.}
\end{subfigure}
~
\begin{subfigure}[t]{0.4 \textwidth}
\centering
\scalebox{.6}{
\begin{tikzpicture}[
    -,>=stealth',shorten >=1pt,
    auto,node distance=4cm,
    thick,
    main node/.style={circle,draw,font=\small\bfseries}
  ]
  \node[main node] (1) {$S\cap T$};
  \node[main node] (2) [below left  of=1] {$S\setminus T$};
  \node[main node] (3) [below right of=1] {$T\setminus S$};
  \node[main node] (4) [below of=1] {$S^{c}\!\cap T^{c}$};

  \draw[red] (1)--(2) (1)--(3) (4)--(2) (4)--(3);

  \draw[red] (2) to[out=200,in=150,looseness=8] (2);
  \draw[red] (3) to[out=340,in= 30,looseness=8] (3);
  \draw (2)--(4) (3)--(4) (4)--(1);

  \begin{scope}[on background layer,dashed,thick]
    \coordinate (Sc) at ($(1)!0.5!(2)$);           
    \draw (Sc) ellipse [x radius=2.8cm,
                        y radius=1.4cm,
                        rotate= 45]               
          node at ($(Sc)+(-2.2, 1.2)$) {$S$};

    \coordinate (Tc) at ($(1)!0.5!(3)$);           
    \draw (Tc) ellipse [x radius=2.8cm,
                        y radius=1.4cm,
                        rotate=-45]               
          node at ($(Tc)+( 2.2, 1.2)$) {$T$};
  \end{scope}
\end{tikzpicture}
}
\caption{The red edges correspond to the edges counted by $e(S\setminus T, (T\setminus S)^c)$ and $e((S\setminus T)^c, T\setminus S)$.}
\end{subfigure}
\caption{Visualization of the two cases in the proof of Theorem~\ref{thm:relating_phi_phisl_phibp}.}
\end{figure}
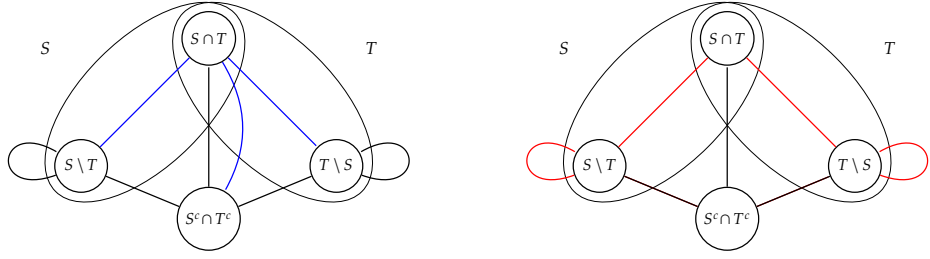\label{fig:edge_removal_visualization_2}

\begin{theorem}\label{thm:relating_phi_phisl_phibp}
Let $G$ be an Eulerian graph. Then,
 \[\phid(G) \leq \min\{\phi(G), \bdir(G)\}.\]
If $G$ is undirected, then we also have
\[ \phid(G)\geq \min\{\phi(G), \beta(G)\}/3.\]
\end{theorem}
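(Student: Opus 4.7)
For $\phid(G)\leq \phi(G)$, I use Lemma~\ref{lem:phi_dirphi} to write $\phi(G) = \min_{S:\vol(S)\leq \vol(V)/2}\phid(S,S)$, and note that every such $(S,S)$ satisfies $2\vol(S)\leq\vol(V)$ and is therefore feasible for $\phid(G)$. For $\phid(G)\leq \bdir(G)$, I simply observe that $\bdir(G)$ minimizes $\phid$ over a subset of the pairs feasible for $\phid(G)$.

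\textbf{Reverse direction (undirected $G$).} I fix an optimizer $(S,T)$ for $\phid(G)$ and partition $V$ into the four parts
\[A = S\setminus T,\quad B = T\setminus S,\quad C = S\cap T,\quad D = V\setminus(S\cup T),\]
as suggested by the figure. Expanding $e(S,T^c)+e(S^c,T)$ along this partition and using undirectedness ($e(X,Y)=e(Y,X)$) gives
\begin{align*}
\phid(S,T)\cdot(\vol(S)+\vol(T)) &= e(A,A)+e(B,B)+e(A,D)+e(B,D)+e(C,A)+e(C,B)+2e(C,D),\\
\phi(C)\cdot\vol(C) &= e(C,A)+e(C,B)+e(C,D),\\
\phid(A,B)\cdot(\vol(A)+\vol(B)) &= e(A,A)+e(B,B)+e(A,D)+e(B,D)+e(C,A)+e(C,B).
\end{align*}
Both $\phi(C)\cdot\vol(C)$ and $\phid(A,B)\cdot(\vol(A)+\vol(B))$ are therefore at most $\phid(S,T)\cdot(\vol(S)+\vol(T))$. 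Combined with the identity $\vol(S)+\vol(T)=\vol(A)+\vol(B)+2\vol(C)$, I then case split: if $\vol(C)\geq \vol(A)+\vol(B)$, then $\vol(S)+\vol(T)\leq 3\vol(C)$, giving $\phi(C)\leq 3\phid(G)$, and feasibility $\vol(C)\leq \vol(V)/2$ follows from $2\vol(C)\leq \vol(S)+\vol(T)\leq \vol(V)$; otherwise $\vol(A)+\vol(B)>\vol(C)$, so $\vol(S)+\vol(T)<3(\vol(A)+\vol(B))$, and the pair $(A,B)$ (which is feasible for $\bdir(G)$ since $A\cap B=\emptyset$ and $0<\vol(A)+\vol(B)\leq\vol(V)$) yields $\beta(G) = \bdir(G)\leq \phid(A,B)<3\phid(G)$ via Lemma~\ref{lem:phibp_vs_beta}. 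In either case $\min\{\phi(G),\beta(G)\}\leq 3\phid(G)$.

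\textbf{Main obstacle.} The only delicate step is the bookkeeping when expanding $e(S,T^c)+e(S^c,T)$ along the four-way partition: one must track that the edge-group $e(C,D)$ appears with coefficient $2$ in $\phid(S,T)$'s numerator but does not appear at all in $\phid(A,B)$'s numerator, which is precisely what makes both comparison inequalities hold and lets the factor $3$ drop out cleanly from the case split. Degenerate cases $\vol(C)=0$ or $\vol(A)+\vol(B)=0$ (which cannot both hold since $\vol(S)+\vol(T)>0$) are handled directly by whichever of the two cases is non-vacuous, and one may WLOG assume $S,T$ contain no isolated vertices so that zero volume means emptiness.
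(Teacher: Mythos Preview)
Your proof is correct and follows essentially the same approach as the paper: the paper isolates the reverse direction as a separate lemma (Lemma~\ref{lem:relating_phi_phisl_phibp}) but uses the identical four-way partition $S\cap T,\ S\setminus T,\ T\setminus S,\ (S\cup T)^c$, establishes the same numerator domination inequalities, and performs the same case split (your threshold $\vol(C)\gtrless\vol(A)+\vol(B)$ is equivalent to the paper's $\vol(S\cap T)\gtrless(\vol(S)+\vol(T))/3$ via the identity $\vol(S)+\vol(T)=\vol(A)+\vol(B)+2\vol(C)$). Your explicit expansion of the numerators is slightly more transparent than the paper's ``terms are counted in'' bookkeeping, but the argument is the same.
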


Before proving Theorem~\ref{thm:relating_phi_phisl_phibp}, we prove the following lemma. 
\begin{lemma}\label{lem:relating_phi_phisl_phibp}
    Let $G$ be an undirected graph and $S,T\subseteq V$ such that $0 <\vol(S)+\vol(T)$. Then,
    \[
    \min\{\phi(S\cap T), \beta(S\setminus T, T\setminus S)\}\leq 3\phid(S,T)
    \]
\end{lemma}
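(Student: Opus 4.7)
The plan is to partition $V$ into the four pieces $A = S \cap T$, $B = S \setminus T$, $C = T \setminus S$, $D = V \setminus (S \cup T)$, and to write the numerators of the three ratios as nonnegative combinations of edge weights between these pieces. Set $p = \vol(A)$ and $q = \vol(B) + \vol(C)$, so that $\vol(S) + \vol(T) = 2p + q$. A direct calculation, whose only subtleties are that $e(X, X)$ double-counts interior edges and that $e(S, T^c) + e(S^c, T)$ picks up each $A$--$D$ edge from both sides, gives with the abbreviations
\begin{align*}
\alpha &= e(A, B) + e(A, C), & \gamma &= e(A, D),\\
\delta &= e(B, B) + e(C, C), & \epsilon &= e(B, D) + e(C, D),
\end{align*}
the identities
$$\phi(S \cap T) = \frac{\alpha + \gamma}{p}, \quad \beta(S\setminus T, T\setminus S) = \frac{\alpha + \delta + \epsilon}{q}, \quad \phid(S, T) = \frac{\alpha + 2\gamma + \delta + \epsilon}{2p + q},$$
where the middle identity uses \cref{lem:bipartite_ratio}. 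Notice that edges between $B$ and $C$ contribute only to the denominator $q$, not to any of the three numerators.

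The key algebraic step is
$$2p \cdot \phi(S \cap T) + q \cdot \beta(S \setminus T, T \setminus S) = 3\alpha + 2\gamma + \delta + \epsilon \leq 3(\alpha + 2\gamma + \delta + \epsilon) = 3\phid(S, T)\cdot(2p + q),$$
which is just the nonnegativity of $\gamma, \delta, \epsilon$. Since the minimum of two nonnegative numbers is at most any convex combination of them,
$$\min\{\phi(S \cap T), \beta(S\setminus T, T\setminus S)\} \leq \frac{2p \cdot \phi(S \cap T) + q \cdot \beta(S\setminus T, T\setminus S)}{2p + q} \leq 3\phid(S, T).$$
The weights $2p$ and $q$ are essentially forced, being the coefficients of $p$ and $q$ in the denominator $2p + q$ of $\phid(S, T)$. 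Degenerate cases are trivial: if $p = 0$ then $\alpha = \gamma = 0$ and the minimum collapses to $\beta = (\delta + \epsilon)/q = \phid(S, T)$; if $q = 0$ then $S = T$ and $\phid(S, T) = \phi(S) = \phi(S \cap T)$.

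The main obstacle is the bookkeeping in the first step --- in particular, writing $e(S, T^c) + e(S^c, T)$ as $\alpha + 2\gamma + \delta + \epsilon$ (noting that edges between $B$ and $C$ do \emph{not} appear, since they lie on neither $S \to T^c$ nor $S^c \to T$) and verifying the $\beta$ formula via \cref{lem:bipartite_ratio}. Once the three ratios are in this common form, the weighted averaging is essentially automatic, and the constant $3$ is the best one can hope for from this approach: it arises from the factor $2$ on $\vol(A)$ in the denominator $2p + q$, which in turn comes from $A$ being counted in both $\vol(S)$ and $\vol(T)$.
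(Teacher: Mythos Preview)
Your proof is correct and rests on the same edge-count identities as the paper's, but packages the final step differently. The paper argues by a case split on the relative size of $p=\vol(S\cap T)$: if $p\ge(2p+q)/3$ it shows $\phi(S\cap T)\le 3\phid(S,T)$ directly from $\alpha+\gamma\le \alpha+2\gamma+\delta+\epsilon$, and if $q\ge(2p+q)/3$ it shows $\beta(S\setminus T,T\setminus S)\le 3\phid(S,T)$ from $\alpha+\delta+\epsilon\le \alpha+2\gamma+\delta+\epsilon$. You instead take the convex combination $2p\cdot\phi+q\cdot\beta$ and bound it by $3(2p+q)\phid$ in one stroke, which is the linear combination of those two numerator inequalities. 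Your averaging argument is a little slicker and avoids the dichotomy, while the paper's version makes explicit \emph{which} of $\phi$ or $\beta$ is small in each volume regime; both yield the same constant $3$ for the same reason.
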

 
\begin{proof}
The proof consists of two cases.\\
\textbf{Case 1} $\vol(S\cap T)\geq (\vol(S)+\vol(T))/3$: In this case we show $\phi(S\cap T)\leq 3\phid(S, T)$. We do this by showing $e(S\cap T, (S\cap T)^c)\leq e(S, T^c) + e(S^c, T)$. See Figure~\ref{fig:edge_removal_visualization_2}(a) for a visualization. Note that we can write 
\[e(S\cap T, (S\cap T)^c) = e(S\cap T, S\setminus T) + e(S\cap T, T\setminus S) + e(S\cap T,(S\cup T)^c).\]
Observe that the terms in $e(S\cap T, S\setminus T)$ are counted in $e(S, T^c)$, the terms in $e(S\cap T, T\setminus S)$ are counted in $e(S^c, T)$ and the terms in $e(S\cap T,(S\cup T)^c)$ are counted in both $e(S^c, T)$ and $e(S, T^c)$. This proves the inequality, as desired. Combining our assumption that $(\vol(S)+\vol(T))/3\leq \vol(S\cap T)$ with the inequality $e(S\cap T, (S\cap T)^c)\leq e(S, T^c) + e(S^c, T)$, we obtain
\[\phi(G)\leq \phi(S\cap T) = \frac{e(S\cap T, (S\cap T)^c}{\vol(S\cap T)} \leq \frac{e(S,T^c) + e(S^c, T)}{(\vol(S)+\vol(T))/3} \leq 3\phid(S,T).\]

\textbf{Case 2} $\vol(S\cap T)\leq (\vol(S)+\vol(T))/3$: Note that $\vol(S)+\vol(T) = 2\vol(S\cap T) + \vol(S\setminus T) + \vol(T\setminus S)$. The assumption that $\vol(S\cap T)\leq (\vol(S)+\vol(T))/3$ thus implies that $\vol(S\setminus T) + \vol(T\setminus S) \geq (\vol(S)+\vol(T))/3$. In this case, we show that $\phid(S\setminus T, T\setminus S)\leq 3\phid(S,T)$. To prove this, we show 
\[e(S\setminus T, (T\setminus S)^c))+e((S\setminus T)^c, T\setminus S)\leq e(S, T^c) + e(S^c, T).\]
See Figure~\ref{fig:edge_removal_visualization_2}(b) for a visualization. Observe that $e(S\setminus T, (T\setminus S)^c))\leq e(S, T^c)$ and $e((S\setminus T)^c, T\setminus S)\leq e(S^c, T)$. This implies,
\[\phid(S\setminus T, T\setminus S)\leq 3\phid(S,T),\]
as desired.
\end{proof}

\begin{proof}[Proof of Theorem~\ref{thm:relating_phi_phisl_phibp}]
Let $G = (V,E)$ be an Eulerian graph.  We begin with the upper bound, $\phid(G)\leq \min\{\phi(G), \bdir(G)\}$. This holds because $\phi(G)$ is the minimum of $\phi(S) = \phid(S,S)$ over sets $S$ of volume at most $\vol(V)/2$ and $\bdir(G)$ is the minimum of $\phid(S,T)$ over disjoint sets $S, T$ such that $\vol(S)+\vol(T)\leq \vol(V)$.
\par Now assume that $G$ is undirected. We show that we can bound $\min\{\phi(G), \beta(G)\}$ above by $3\phid(G)$. Let $S,T\subseteq V$ such that $\phid(S,T) = \phid(G)$ and $\vol(S) + \vol(T) \leq \vol(V)$. Applying Lemma~\ref{lem:relating_phi_phisl_phibp}, we have \[\min\{\phi(G), \beta(G)\}\leq \min\{\phi(S\cap T), \beta(S\setminus T, T\setminus S)\}\leq 3\phid(S,T) = 3\phid(G),\]
which completes the proof.
\end{proof}


The loss of a factor of $3$ between $\min\{\phi(G), \beta(G)\}$ and $\phid(G)$ in the proof of Theorem~\ref{thm:relating_phi_phisl_phibp} comes from the fact that we are removing elements from $S$ and $T$, which affects the volume. 

Now we show the second inequality in Theorem~\ref{thm:relating_phi_phisl_phibp} cannot hold for all Eulerian (or even regular) directed graphs even if we replace $3$ with any finite quantity even a function of $n$.

\begin{figure}[H]
\centering
\begin{tikzpicture}[
       decoration = {markings,
                     mark=at position .5 with {\arrow{Stealth[length=2mm]}}},
       dot/.style = {circle, fill, inner sep=2.4pt, node contents={},
                     label=#1},
every edge/.style = {draw, postaction=decorate}
                        ]
\node (x) at (0,5) [dot=$x$];
\node (u) at (0,2) [dot=below:$u$];
\node (v) at (4,2) [dot=below:$v$];
\node (y) at (4,5) [dot=right:$y$];
\path   (u) edge[bend right] (v)    (v) edge[bend right] (u)    
        (u) edge (x)     (x) edge (v)
        (v) edge (y)     (y) edge (u);
\end{tikzpicture}
\caption{An Eulerian graph $G$ such $\phid(G) = 0$ and $\min\{\phi(G), \bdir(G)\} > 0$.} \label{fig:counter_example}
\end{figure}

\begin{proposition}\label{prop:counter_example}
    There exists a regular directed graph $G$ such that 
\begin{center}
    $\phid(G) = 0$ and $\min\{\phi(G), \bdir(G)\} > 0$
\end{center}
\end{proposition}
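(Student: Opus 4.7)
The plan is to construct an explicit four-vertex regular Eulerian digraph whose optimal $\phid$-witness $(S,T)$ is forced to overlap (i.e., $S\cap T\neq\emptyset$) while also having $S\neq T$, thereby separating $\phid$ simultaneously from $\phi$ (which corresponds to taking $S=T$) and from $\bdir$ (which corresponds to taking $S\cap T=\emptyset$). Concretely, I take $G$ to be the graph of Figure~\ref{fig:counter_example} on vertex set $\{u,v,x,y\}$ with edges $u\to v$, $v\to u$, $u\to x$, $x\to v$, $v\to y$, $y\to u$, augmented with self-loops at $x$ and $y$ so that every vertex has in- and out-degree exactly $2$, making $G$ a $2$-regular Eulerian digraph.

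To establish $\phid(G)=0$, I would exhibit the overlapping witnesses $S=\{u,x\}$ and $T=\{v,x\}$ and verify directly from the edge list that $N^+(u)=N^+(x)=\{v,x\}\subseteq T$ and $N^-(v)=N^-(x)=\{u,x\}\subseteq S$, so $e(S,T^c)=0=e(S^c,T)$. The volume constraint is satisfied with equality since $\vol(S)+\vol(T)=4+4=8=\vol(V)$.

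The remaining task is to verify $\phi(G)>0$ and $\bdir(G)>0$. For $\phi$, a short enumeration over the four singletons and six two-element subsets $S\subsetneq V$ with $\vol(S)\le 4$ shows $e(S,S^c)\ge 1$ in each case, essentially because $G$ is strongly connected via the cycle $u\to x\to v\to y\to u$. For $\bdir(G)>0$, I would argue by contradiction: if disjoint nonempty $S,T$ had $\phid(S,T)=0$, then the closure rules ``$w\in S\Rightarrow N^+(w)\subseteq T$'' and ``$w\in T\Rightarrow N^-(w)\subseteq S$'' would both apply. A case analysis on which vertex of $\{u,v,x,y\}$ lies in $S$ then forces overlap: if $x\in S$, the self-loop gives $x\in N^+(x)\subseteq T$ immediately; if $u\in S$, then $v,x\in T$, and applying the second rule to $x\in T$ gives $N^-(x)=\{u,x\}\subseteq S$, so $x\in S\cap T$; the cases $v\in S$ and $y\in S$ are symmetric. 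If on the other hand $S=\emptyset$ then any $w\in T$ would have $N^-(w)\subseteq S=\emptyset$, impossible since every vertex has in-degree $2$.

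The main obstacle, such as it is, is carrying out the disjoint-case analysis for $\bdir(G)>0$ completely; the self-loops at $x,y$ are introduced precisely so that each closure chain terminates in one or two steps and unavoidably produces overlap. Once these three items are in place, the conclusion $\phid(G)=0<\min\{\phi(G),\bdir(G)\}$ follows at once.
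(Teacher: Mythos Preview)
Your proposal is correct. You use the same core four-vertex construction as the paper (Figure~\ref{fig:counter_example}), but your route to regularity differs: the paper first proves the three claims for the non-regular Eulerian graph on $\{u,v,x,y\}$ and only afterwards passes to a $2$-regular six-vertex graph by duplicating $u$ and $v$ (Figure~\ref{fig:counter_example2}), leaving the re-verification for $H$ implicit; you instead add self-loops at $x$ and $y$ from the outset, so the graph is $2$-regular on four vertices immediately and all three checks are done on a single object. Your argument for $\bdir(G)>0$ is also tighter than the paper's: rather than an exhaustive enumeration over all disjoint pairs $(S,T)$, you exploit the two closure implications $w\in S\Rightarrow N^+(w)\subseteq T$ and $w\in T\Rightarrow N^-(w)\subseteq S$ and let the self-loops at $x,y$ force overlap after at most two steps. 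Both approaches establish the same statement, but yours avoids the vertex-duplication step and the full case enumeration; the paper's approach, on the other hand, produces a loop-free example.
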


\begin{proof}
    We begin by constructing an Eulerian graph and then show how to make it regular. Let $c > 0$ and $G = (V,E)$ be defined by $V = \{x,y,u,v\}$ and $E = \{(u,x), (u,v), (v, u), (v, y), (x, v), (y,u)\}$. See Figure~\ref{fig:counter_example} for a picture of $G$. Note that $G$ is Eulerian. Consider $S = \{x, v\}$ and $T = \{x, u\}$. By inspection, $\vol(S) = \vol(T) = 3\leq \vol(V)/2$ and $e(S, T^c)+e(S^c,T) = 0$, and so $\phid(S,T) = \phid(G) = 0.$ To complete the proof, we must show $0<\min\{\phi(G), \bdir(G)\}$. 
    
    \par First we show $0 < \phi(G)$. Let $S\subseteq V$ be nonempty and satisfy $\vol(S)\leq \vol(V)/2 = 3$. This implies that $S$ cannot contain both $u$ and $v$ and $|S|\leq 2$. If $|S| = 1$, then $e(S, S^c) + e(S^c, S) > 0$, and so $\phi(S) > 0$. If $|S| = 2$, then $S$ contain a vertex in $\{x,y\}$. By symmetry, we may assume $x\in S$. No matter which other vertex is in $S$, we have $e(S, S^c) + e(S^c, S) > 0$. Thus, $\phi(G) > 0$.
    
   \par Next, we show $\bdir(G) > 0$. Let $S, T\subseteq V$ satisfy $0 <\vol(S) + \vol(T)\leq \vol(V)$ and $S\cap T = \emptyset$. Note, $S, T, (S\cup T)^c$ partitions $V$. It suffices to show $e(S, T^c) + e(S^c, T) > 0$. If either $S$ or $T$ is empty, then $e(S, T^c) + e(S^c, T) > 0$. Hence, we may assume both $S$ and $T$ are non-empty. By the pigeon-hole principle, $S, T$, or $(S\cup T)^c$ must contain $2$ vertices. First assume $S$ or $T$ has at least $2$ vertices. By symmetry, we may assume $|S| = 2$. In this case, $e(S) > 0$ unless $S = \{x,y\}$, and so $e(S, T^c) > 0$. Otherwise $S = \{x,y\}$, and either $T = \{u,v\}$ or either $u$ or $v$ is in  $(S\cup T)^c$. If $T = \{u,v\}$, then $e(S^c, T) > 0$. If either $u$ or $v$ is in  $(S\cup T)^c$, then $e(S, T^c) > 0$. The final case is $|(S\cup T)^c| = 2$. If $S = \{u\}$ or $\{v\}$, then one of their two out-neighbors must be in  $(S\cup T)^c$. This implies, $e(S, T^c) > 0$. Otherwise, $S = \{x\}$ or $S = \{y\}$. By symmetry we assume $S = \{x\}$. Now $e(S, T^c) > 0$ unless, $T = \{v\}$. But, in this case, $u\in (S\cup T)^c$. This implies $e(S^c, T) > 0$. This shows $\bdir(G) > 0$ and completes the proof.

   \par To obtain a regular graph we can duplicate the vertices of degree $2$ (namely $u$ and $v$) and split the edge weights between the copies to obtain the following graph. Let $H = (V,E)$ be defined by $V(H) = \{x,y,u,v\}$ and $E(H) = \{(u_1,x), (u_1,v_1), (v_1, u_1), (v_1, y), (x, v_1), (y,u_1), (u_2,x), (u_2,v_2), (v_2, u_2), \\(v_2, y), (x, v_2), (y,u_2)\}$. See Figure~\ref{fig:counter_example2}
\end{proof}

\begin{figure}[H]
\centering
\begin{tikzpicture}[
       decoration = {markings,
                     mark=at position .5 with {\arrow{Stealth[length=2mm]}}},
       dot/.style = {circle, fill, inner sep=2.4pt, node contents={},
                     label=#1},
every edge/.style = {draw, postaction=decorate}
                        ]
\node (x) at (0,5) [dot=left:$x$];
\node (u_1) at (0,2) [dot=below:$u_1$];
\node (u_2) at (0,8) [dot=above:$u_2$];
\node (v_1) at (4,2) [dot=below:$v_1$];
\node (v_2) at (4,8) [dot=above:$v_2$];
\node (y) at (4,5) [dot=right:$y$];
\path   (u_1) edge[bend right] (v_1)    (v_1) edge[bend right] (u_1)    
        (u_1) edge (x)     (x) edge (v_1)
        (v_1) edge (y)     (y) edge (u_1);

 \path (u_2) edge[bend right] (v_2)    (v_2) edge[bend right] (u_2)    
        (u_2) edge (x)     (x) edge (v_2)
        (v_2) edge (y)     (y) edge (u_2);   
\end{tikzpicture}

\caption{A regular graph $G$ such $\phid(G) = 0$ and $\min\{\phi(G), \bdir(G)\} > 0$.} \label{fig:counter_example2}
\end{figure}

\section{Higher-Order Cheeger Inequalities}\label{sec:higher_order_sv}
In this section we prove higher-order Cheeger like inequality for the $k$-th largest singular value of the normalized adjacency matrix $\sigma_k$ for Eulerian digraphs. In particular, we show that $\sigma_k$ robustly captures how close a graph is to having two families of sets $S_1, \ldots, S_k$ and $T_1,\ldots, T_k$ such that all edges leaving $S_i$ enter $T_i$ and all edges entering $T_i$ come from $S_i$. We begin by introducing necessary definitions, then state results formally. 

\subsection{The Higher-Order Cheeger Inequalities}
Like Cheeger's inequality, the Higher-Order Cheeger Inequalities show that $\mu_k$ and $\mu_n$ robustly measure other combinatorial quantities. In particular, they measure how close graph is to having $k$ connected components and a bipartite component, respectively, generalizing the fact that for a graph $G$ and $\mu_k = 1$ if and only if $G$ has at least $k$ components, and $\mu_n = -1$ if and only if $G$ has a bipartite component.

\begin{definition}
    For a positive integer $k$, we define the \textbf{$k$-way expansion constant} to be
    \[
        \rho_k(G) = \min_{S_1,\ldots, S_k}\max\{\phi_{G}(S_i)\colon i\in[k]\},
    \]
     where the min is over all collections of $k$ non-empty, disjoint subsets $S_1,\ldots, S_k\subseteq V$. 
\end{definition}

The higher-order Cheeger inequalities of Lee, Oveis Gharan, and Trevisan \cite{higher-order-cheeger} shows that $\mu_k$ robustly measures how close a graph is to having $k$-connected components.

\begin{theorem}[higher-order Cheeger, \cite{higher-order-cheeger}]\label{thm:higher_order_cheeger}
    Let $G$ be an undirected graph, $1 = \mu_1 \geq \mu_2 \geq \ldots \geq \mu_n\geq -1$ be the eigenvalues of its normalized adjacency matrix $A_G$ and $k\in\mathbb{N}$, we have
    \[
        \frac{1-\mu_k}{2} \leq \rho_k(G)\leq O(k^2)\cdot\sqrt{(1-\mu_k)}.
    \]
\end{theorem}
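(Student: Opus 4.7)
The plan is to prove the two inequalities separately, in the spirit of the ordinary Cheeger bound.

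For the lower bound $(1-\mu_k)/2 \leq \rho_k(G)$, the natural tool is the Courant--Fischer min--max characterization of eigenvalues applied to the normalized Laplacian $L = I - A_G$. We have $1 - \mu_k$ equal to the $k$-th smallest eigenvalue of $L$, which equals $\min_{U : \dim U = k} \max_{0 \neq x \in U} x^\top L x / x^\top x$. Given any $k$ disjoint non-empty sets $S_1,\ldots,S_k$ that achieve the minimum in the definition of $\rho_k(G)$, I would form the $k$-dimensional test subspace $U = \mathrm{span}(D^{1/2}\mathbf{1}_{S_1},\ldots,D^{1/2}\mathbf{1}_{S_k})$; these vectors are orthogonal in the standard inner product since the $S_i$ are disjoint. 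For any $x = \sum_i c_i D^{1/2}\mathbf{1}_{S_i}$, a routine expansion of $x^\top L x$ followed by a Cauchy--Schwarz-type estimate on the cross terms gives $x^\top L x \leq 2\max_i \phi(S_i) \cdot x^\top x = 2\rho_k(G) \cdot x^\top x$, which upon substituting into the min--max formula yields $1 - \mu_k \leq 2\rho_k(G)$.

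For the upper bound $\rho_k(G) \leq O(k^2)\sqrt{1-\mu_k}$, I would follow the spectral embedding approach of Lee, Oveis Gharan, and Trevisan. The idea is to take the top $k$ eigenvectors of $A_G$ (i.e.\ those with eigenvalues $\mu_1,\ldots,\mu_k$) and form a map $F : V \to \mathbb{R}^k$ by stacking their entries, normalized by $D^{-1/2}$. One then establishes a \emph{spreading} property: if $\mu_k$ is close to $1$, then $F$ is far from being concentrated on any single low-conductance set, so its mass can be distributed across $k$ geometrically well-separated regions. The final step is a randomized partitioning procedure (a padded decomposition, or the spreading/smoothing procedure of LOT) that produces $k$ disjoint subsets $S_1,\ldots,S_k$ together with test vectors supported on them, whose Rayleigh quotients are all bounded by $O(k^2)(1-\mu_k)$. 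A Cheeger sweep on each test vector then upgrades this to a conductance bound of $O(k^2)\sqrt{1-\mu_k}$, matching the theorem.

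The main obstacle is the upper bound, and within it the randomized partitioning step. The difficulty is that the spectral embedding from the top $k$ eigenvectors only gives an \emph{approximate} $k$-clustering, and one must combinatorially isolate $k$ sets whose indicator-like test vectors have controlled Rayleigh quotients on an underlying graph that might connect clusters weakly. Getting the dependence on $k$ to be polynomial (rather than exponential) requires careful localization, and the $O(k^2)$ factor specifically emerges from the interaction between the dimension of the embedding and the loss incurred in the Cheeger sweep per coordinate. The lower bound, by contrast, is essentially a direct application of min--max.
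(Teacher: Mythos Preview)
The paper does not prove this theorem at all: it is stated as a known result of Lee, Oveis Gharan, and Trevisan and used as a black box (the paper's own contribution in Section~5 is to apply it to the symmetric lift). So there is no ``paper's proof'' to compare against.

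Your sketch is a faithful high-level outline of the LOT argument. The lower bound via Courant--Fischer with the test space $\mathrm{span}(D^{1/2}\mathbf{1}_{S_1},\ldots,D^{1/2}\mathbf{1}_{S_k})$ is exactly right, and the bound $x^\top L x \leq 2\max_i\phi(S_i)\cdot x^\top x$ follows from $(c_i-c_j)^2\leq 2c_i^2+2c_j^2$ applied edge by edge. For the upper bound your description of spectral embedding, spreading/isotropy, randomized padded decomposition, and per-piece Cheeger sweep is the correct architecture.

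One small inconsistency: you say the partitioning yields test vectors with Rayleigh quotient $O(k^2)(1-\mu_k)$ and then the sweep gives conductance $O(k^2)\sqrt{1-\mu_k}$. But a Cheeger sweep on a vector of Rayleigh quotient $R$ gives conductance $O(\sqrt{R})$, so a Rayleigh bound of $O(k^2)(1-\mu_k)$ would yield only $O(k)\sqrt{1-\mu_k}$. In the actual LOT analysis the $k$-losses accumulate differently (some in the partitioning, some in the localization of the embedding) to produce the stated $O(k^2)$; if you were to write this out in full you would need to track where each factor of $k$ enters.
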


This result resolved a conjecture of Miclo \cite{miclo2008eigenfunctions} and led to new algorithms for $k$-way spectral partitioning.

\subsection{Our Singular-Value Higher Order Cheeger Inequality}
In this section we prove higher-order Cheeger like inequality for the $k$-th largest singular value of the normalized adjacency matrix $\sigma_k$ for Eulerian digraphs. In particular, we show that $\sigma_k$ robustly captures how close a graph is to having two families of sets $S_1, \ldots, S_k$ and $T_1,\ldots, T_k$ such that all edges leaving $S_i$ enter $T_i$ and all edges entering $T_i$ come from $S_i$. We begin by introducing necessary definitions, then state results formally. 

\begin{definition}\label{def:k-way-sl-cond}
    Let $G$ be an Eulerian graph and $k\in[n]$. The \textbf{$k$-way directed conductance} is defined by 
    \[
     \phi_{k,dir}(G) = \min_{\substack{S_1, \ldots, S_k; \\ T_1,\ldots, T_k}}\max\{\phid(S_i, T_i): i\in[k]\},
    \]  
where the min is over all collections of non-empty subsets $S_1, \ldots, S_k$ and $T_1,\ldots, T_k$ of $V$ such that $S_i\cap S_j =T_i\cap T_j=\emptyset$ for all $i\neq j\in[k]$. 
\end{definition}

For Eulerian digraphs we obtain the following higher-order Cheeger inequality for singular values.

\begin{theorem}\label{thm:sv_higher_order_cheeger}
    Let $G$ be an Eulerian directed graph and $k\in\mathbb{N}$. Then
    \[
      \frac{1-\sigma_k}{2} \leq 
        \phi_{k, dir}(G)\leq O(k^2)\cdot\sqrt{1-\sigma_k}.
    \] 
\end{theorem}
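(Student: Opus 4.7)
The plan is to reduce to the symmetric lift $sl(G)$ and apply the classical higher-order Cheeger inequality (Theorem~\ref{thm:higher_order_cheeger}), mimicking the proof of Theorem~\ref{thm:di-cheeger}. The key spectral input is Lemma~\ref{lem:eig_vals_sl}, which gives that the decreasing eigenvalues of $A_{sl(G)}$ are $\sigma_1(G)\geq\cdots\geq\sigma_n(G)\geq-\sigma_n(G)\geq\cdots\geq-\sigma_1(G)$ and hence $\mu_k(sl(G))=\sigma_k(G)$ for $k\leq n$. The combinatorial dictionary comes from Lemma~\ref{lem:relating_edges_vol_in_G_slG}: lifting a pair $(S,T)$ of subsets of $V(G)$ to $A=(S\times\{L\})\cup(T\times\{R\})$ gives $\phi_{sl(G)}(A)=\phid(S,T)$, and conversely, for any $A\subseteq V(sl(G))$, $\phi_{sl(G)}(A)=\phid(\pi_L(A),\pi_R(A))$.

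For the lower bound, given any collection of disjoint non-empty $S_1,\ldots,S_k$ and disjoint non-empty $T_1,\ldots,T_k$ witnessing $\phi_{k,dir}(G)$, I would lift each pair to $A_i=(S_i\times\{L\})\cup(T_i\times\{R\})$. The $A_i$'s are disjoint and non-empty (a collision $(v,L)\in A_i\cap A_j$ would force $v\in S_i\cap S_j=\emptyset$, and similarly on the right), and by Lemma~\ref{lem:relating_edges_vol_in_G_slG} each $\phi_{sl(G)}(A_i)=\phid(S_i,T_i)$. Hence $\rho_k(sl(G))\leq\max_i\phid(S_i,T_i)=\phi_{k,dir}(G)$. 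Combining with the lower bound of Theorem~\ref{thm:higher_order_cheeger} applied to $sl(G)$ yields $(1-\sigma_k(G))/2=(1-\mu_k(sl(G)))/2\leq\rho_k(sl(G))\leq\phi_{k,dir}(G)$. For the upper bound, I would apply the upper bound of Theorem~\ref{thm:higher_order_cheeger} to $sl(G)$ to obtain disjoint non-empty $A_1,\ldots,A_k\subseteq V(sl(G))$ with $\max_i\phi_{sl(G)}(A_i)\leq O(k^2)\sqrt{1-\sigma_k(G)}$, and then project to $S_i=\pi_L(A_i)$, $T_i=\pi_R(A_i)$. Disjointness of the $S_i$'s (and of the $T_i$'s) follows from disjointness of the $A_i$'s, and by Lemma~\ref{lem:relating_edges_vol_in_G_slG} each $\phid(S_i,T_i)=\phi_{sl(G)}(A_i)$.

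The main obstacle is ensuring that each projected $S_i$ and $T_i$ is non-empty, as required by Definition~\ref{def:k-way-sl-cond}. If $\pi_R(A_i)=\emptyset$, then $A_i\subseteq V(G)\times\{L\}$; since $sl(G)$ is bipartite, every edge incident to $A_i$ exits $A_i$, forcing $\phi_{sl(G)}(A_i)=1$. Hence non-emptiness can fail only in the regime $O(k^2)\sqrt{1-\sigma_k(G)}\geq 1$, where the desired inequality follows from the trivial bound $\phi_{k,dir}(G)\leq 1$ (absorbing the constant into the big-$O$, and using that $\phid(S,T)\leq 1$ on any valid collection since $e(S,T^c)\leq\vol(S)$ and $e(S^c,T)\leq\vol(T)$ in an Eulerian graph). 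Otherwise each $\phi_{sl(G)}(A_i)<1$, both projections are non-empty, and the projected collection witnesses $\phi_{k,dir}(G)\leq O(k^2)\sqrt{1-\sigma_k(G)}$, completing the proof.
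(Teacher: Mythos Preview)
Your proposal is correct and follows essentially the same route as the paper: the paper isolates the identity $\phi_{k,dir}(G)=\rho_k(sl(G))$ as a separate lemma (Lemma~\ref{lem:higher_order_sl_cond_equals_cond_sl}) and then applies Theorem~\ref{thm:higher_order_cheeger} to $sl(G)$, whereas you establish the two needed inequalities inline via the same lift/project correspondence. Your handling of the degenerate case where one of $\pi_L(A_i),\pi_R(A_i)$ is empty (forcing $\phi_{sl(G)}(A_i)=1$ and hence only arising when the target bound is $\geq 1$) is a point the paper's proof of Lemma~\ref{lem:higher_order_sl_cond_equals_cond_sl} does not explicitly address.
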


Analogous to Theorem~\ref{thm:relating_phi_phisl_phibp}, for undirected graphs we prove a stronger result showing that in $\phi_{k,dir}(G)$ the sets $S_1,\ldots, S_k$ and $T_1, \ldots, T_k$ can be made to satisfy either $S_i\cap T_i = \emptyset$ or $S_i=T_i$ for all $i\in[k]$.

\begin{definition}
    Let $G = (V,E)$ be an Eulerian directed graph and $k\in\mathbb{N}$. We define $\rho_{k,dir}(G)$ by 
    \begin{align*}
        \rho_{k,dir}(G) = \min_{\substack{S_1, \ldots, S_k; \\ T_1,\ldots, T_k}}\max\{\phid(S_i, T_i): i\in[k]\},
    \end{align*}
where the min is over all collections of non-empty subsets $S_1, \ldots, S_k$ and $T_1,\ldots, T_k$ of $V$ such that for all $i\neq j\in [k]$ we have $S_i\cap S_j = T_i\cap T_j = \emptyset$ and for all $i\in[k]$ we have $S_i = T_i$ or $S_i\cap T_i = \emptyset$.
\end{definition}

For undirected graphs show that $\rho_{k,dir}$ is within a  constant factor of the k-way directed conductance $\phi_{k,dir}$ and thus also has a relation to $\sigma_k$ like in Theorem~\ref{thm:sv_higher_order_cheeger}. So, we can use the former in our higher-order Cheeger for singular values.

\begin{theorem}\label{thm:und_sv_higher-order-cheeger}
    Let $G$ be an undirected graph and $k\in\mathbb{N}$. The $k$-way directed conductance of $G$ satisfies
    \begin{align*}
       \phi_{k,dir}(G)\leq \rho_{k,dir}(G)\leq 3\cdot\phi_{k,dir}(G).
    \end{align*}
\end{theorem}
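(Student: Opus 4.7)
The first inequality $\phi_{k,dir}(G) \leq \rho_{k,dir}(G)$ is immediate because $\rho_{k,dir}(G)$ minimizes the same objective $\max_i \phid(S_i, T_i)$ over a strict sub-collection of the admissible families for $\phi_{k,dir}(G)$: namely those additionally requiring $S_i = T_i$ or $S_i \cap T_i = \emptyset$ for every $i$.

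For the upper bound $\rho_{k,dir}(G) \leq 3 \phi_{k,dir}(G)$, the plan is to take a collection $\{(S_i, T_i)\}_{i=1}^k$ achieving $\phi_{k,dir}(G)$ and round each pair independently to a new pair $(S'_i, T'_i)$ satisfying the extra $\rho_{k,dir}$ constraint, without blowing up the conductance by more than a factor of $3$. For each $i$, I would invoke Lemma~\ref{lem:relating_phi_phisl_phibp} on the pair $(S_i, T_i)$: either $\phi(S_i \cap T_i) \leq 3 \phid(S_i, T_i)$ or $\phid(S_i \setminus T_i, T_i \setminus S_i) \leq 3 \phid(S_i, T_i)$. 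In the first case set $(S'_i, T'_i) = (S_i \cap T_i, S_i \cap T_i)$, so that $S'_i = T'_i$ and $\phid(S'_i, T'_i) = \phi(S_i \cap T_i)$ by Lemma~\ref{lem:phi_dirphi}; in the second set $(S'_i, T'_i) = (S_i \setminus T_i, T_i \setminus S_i)$, which satisfies $S'_i \cap T'_i = \emptyset$. Either way $\phid(S'_i, T'_i) \leq 3 \phid(S_i, T_i) \leq 3 \phi_{k,dir}(G)$, and taking the maximum over $i$ gives the claim, provided the remaining constraints of $\rho_{k,dir}$ are met.

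The key observation making this work is that cross-index disjointness is preserved automatically: both possible choices give $S'_i \subseteq S_i$ and $T'_i \subseteq T_i$, so $S'_i \cap S'_j \subseteq S_i \cap S_j = \emptyset$ and $T'_i \cap T'_j \subseteq T_i \cap T_j = \emptyset$ for $i \neq j$. The main obstacle is handling the degenerate cases where the chosen option yields an empty set. If $S_i \cap T_i = \emptyset$ already, I simply keep $(S_i, T_i)$, which is already of the disjoint type allowed by $\rho_{k,dir}$; similarly if $S_i = T_i$. In the remaining cases at least one of $S_i \cap T_i$, $S_i \setminus T_i$, $T_i \setminus S_i$ is non-empty, and the case analysis in the proof of Lemma~\ref{lem:relating_phi_phisl_phibp} (comparing $\vol(S_i \cap T_i)$ against $(\vol(S_i) + \vol(T_i))/3$) selects a non-empty option with the required bound. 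Finally, in the trivial regime $\phi_{k,dir}(G) \geq 1/3$ the inequality holds for free since $\rho_{k,dir}(G) \leq 1 \leq 3 \phi_{k,dir}(G)$, which absorbs any remaining pathological configurations.
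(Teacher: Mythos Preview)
Your proposal is correct and follows essentially the same route as the paper: take an optimal family for $\phi_{k,dir}(G)$, apply Lemma~\ref{lem:relating_phi_phisl_phibp} to each pair $(S_i,T_i)$ using the volume threshold $\vol(S_i\cap T_i)$ versus $(\vol(S_i)+\vol(T_i))/3$ to decide between $(S_i\cap T_i,S_i\cap T_i)$ and $(S_i\setminus T_i,T_i\setminus S_i)$, and note that $S'_i\subseteq S_i$, $T'_i\subseteq T_i$ preserves the cross-index disjointness. You are in fact somewhat more careful than the paper about the degenerate/non-emptiness edge cases.
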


\begin{corollary}
     Let $G$ be an undirected graph and $k\in\mathbb{N}$. Then,
     \[
     \frac{1-\sigma_k}{2}\leq \rho_{k,dir}\leq O(k^2)\sqrt{1-\sigma_k}.
     \]
\end{corollary}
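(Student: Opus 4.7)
The corollary should follow essentially by chaining the two preceding theorems, so the plan is to derive both directions of the inequality by invoking Theorem~\ref{thm:sv_higher_order_cheeger} and Theorem~\ref{thm:und_sv_higher-order-cheeger} and checking that the sandwich holds up to the claimed constants. No new combinatorial or spectral content needs to be introduced.

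For the upper bound, I would start from Theorem~\ref{thm:und_sv_higher-order-cheeger}, which gives $\rho_{k,dir}(G) \leq 3 \cdot \phi_{k,dir}(G)$ when $G$ is undirected. Plugging in the upper bound $\phi_{k,dir}(G) \leq O(k^2)\sqrt{1-\sigma_k}$ from Theorem~\ref{thm:sv_higher_order_cheeger} (which applies to all Eulerian digraphs, hence in particular to undirected $G$) yields $\rho_{k,dir}(G) \leq 3 \cdot O(k^2)\sqrt{1-\sigma_k} = O(k^2)\sqrt{1-\sigma_k}$, absorbing the factor of $3$ into the $O(\cdot)$.

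For the lower bound, I would again use Theorem~\ref{thm:und_sv_higher-order-cheeger}, this time in the other direction: $\phi_{k,dir}(G) \leq \rho_{k,dir}(G)$. Combining with the lower bound from Theorem~\ref{thm:sv_higher_order_cheeger} gives
\[
\frac{1-\sigma_k}{2} \leq \phi_{k,dir}(G) \leq \rho_{k,dir}(G),
\]
which is the desired inequality. Both directions together produce the statement of the corollary.

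The only step that requires any thought is ensuring that the constant factor in Theorem~\ref{thm:und_sv_higher-order-cheeger} really can be absorbed into the $O(k^2)$ of Theorem~\ref{thm:sv_higher_order_cheeger} without issue; since $3$ is independent of $k$, there is no obstacle. In short, the proof is a two-line chaining of the previously established bounds, and I do not anticipate any substantive difficulty.
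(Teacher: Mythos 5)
Your proof is correct and matches the approach the paper intends (the paper leaves this corollary without an explicit proof, as it follows immediately by chaining Theorem~\ref{thm:sv_higher_order_cheeger} with Theorem~\ref{thm:und_sv_higher-order-cheeger} exactly as you describe). The only thing worth noting, which you implicitly handle, is that Theorem~\ref{thm:sv_higher_order_cheeger} applies to undirected $G$ because every undirected graph is Eulerian in the paper's sense.
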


Before proving Theorem~\ref{thm:sv_higher_order_cheeger} and Theorem~\ref{thm:und_sv_higher-order-cheeger} we prove a result relating $\sigma_k$ to combinatorial properties of graphs.

The Lemma below relates the $k$-way directed conductance of an Eulerian digraph $G$ to the $k$-way expansion of the symmetric lift of $G$.
\begin{lemma}\label{lem:higher_order_sl_cond_equals_cond_sl}
    Let $G = (V,E)$ be an Eulerian digraph and $sl(G)$ be the symmetric lift of $G$. Then $\phi_{k,dir}(G) = \rho_k(sl(G))$.
\end{lemma}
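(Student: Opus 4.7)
The plan is to exhibit a value-preserving correspondence between families feasible for $\phi_{k,dir}(G)$ and those feasible for $\rho_k(sl(G))$, by applying the bijection $\pi$ of \cref{lem:relating_edges_vol_in_G_slG} separately to each of the $k$ sets in a collection. Since $\pi$ already relates volumes and boundary sizes between $sl(G)$ and $G$, the individual conductance ratios $\phi_{sl(G)}(U_i)$ and $\phid(\pi_L(U_i), \pi_R(U_i))$ match exactly as in the proof of \cref{lem:sl_cond_equals_cond_sl}; the only new content in the higher-order version is that the pairwise-disjointness constraints in the two definitions translate correctly under $\pi$.

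For the direction $\rho_k(sl(G)) \le \phi_{k,dir}(G)$, I would take non-empty pairwise-disjoint families $S_1,\ldots,S_k$ and $T_1,\ldots,T_k$ witnessing $\phi_{k,dir}(G)$ and define $U_i = \pi^{-1}(S_i, T_i) = (S_i\times\{L\}) \cup (T_i\times\{R\})$ for each $i$. Each $U_i$ is non-empty, and the assumed disjointness $S_i\cap S_j = T_i\cap T_j = \emptyset$ for $i\ne j$ immediately forces $U_i\cap U_j = \emptyset$, so the $U_i$'s form a feasible family for $\rho_k(sl(G))$. Applying \cref{lem:relating_edges_vol_in_G_slG} to each $U_i$ gives $\phi_{sl(G)}(U_i) = \phid(S_i, T_i)$, and taking the maximum over $i$ yields the desired inequality.

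For the reverse direction $\phi_{k,dir}(G) \le \rho_k(sl(G))$, I would take disjoint non-empty $U_1,\ldots,U_k \subseteq V(sl(G))$ witnessing $\rho_k(sl(G))$ and set $S_i = \pi_L(U_i)$, $T_i = \pi_R(U_i)$. Pairwise disjointness of the $U_i$'s gives pairwise disjointness of both the $S_i$'s and the $T_i$'s. When every $S_i$ and $T_i$ is non-empty, \cref{lem:relating_edges_vol_in_G_slG} again gives $\phid(S_i, T_i) = \phi_{sl(G)}(U_i)$ and taking the max over $i$ finishes this direction. The one subtle case is that some $U_i$ might lie entirely on one side $L$ or $R$, making $S_i$ or $T_i$ empty; but then every edge touching $U_i$ is a cut edge, so $\phi_{sl(G)}(U_i) = 1$ and hence $\rho_k(sl(G)) = 1$, while $\phid(S,T) \le 1$ always (its numerator is at most $\vol(S) + \vol(T)$), so $\phi_{k,dir}(G) \le 1$ holds automatically in this case. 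Handling this degenerate case is the only step that is not completely mechanical, and even it is essentially immediate; I do not anticipate any real obstacle in this proof beyond the bookkeeping to verify that the disjointness constraints are preserved by $\pi$ in both directions.
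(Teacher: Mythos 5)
Your proposal follows the paper's strategy exactly: apply the bijection $\pi$ of \cref{lem:relating_edges_vol_in_G_slG} to each set in the optimizing family, observe that pairwise disjointness carries over in both directions, and conclude from the induced equality $\phi_{sl(G)}(U_i)=\phid\bigl(\pi_L(U_i),\pi_R(U_i)\bigr)$ applied term by term under the max. You actually go one step further than the paper in the direction $\phi_{k,dir}(G)\le\rho_k(sl(G))$: the paper sets $S_i=\pi_L(X_i)$, $T_i=\pi_R(X_i)$ and immediately feeds $(S_i,T_i)$ into the minimization defining $\phi_{k,dir}(G)$, without checking that each $S_i$ and $T_i$ is non-empty as \cref{def:k-way-sl-cond} requires—and this can fail if some optimal $X_i$ lies entirely on one side of the bipartition. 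Your observation that in this degenerate case bipartiteness of $sl(G)$ forces $\phi_{sl(G)}(X_i)=1$, hence $\rho_k(sl(G))=1\ge\phi_{k,dir}(G)$ holds trivially, cleanly closes this small gap and makes your write-up slightly more complete than the paper's.
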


\begin{proof}
    We show that $\phi_{k,dir}(G) = \rho_{k}(sl(G))$. First, we prove $\phi_{k,dir}(G) \leq \rho_{k}(sl(G))$. Let $X_1, \ldots, X_k$ be non-empty disjoint subsets of $V(sl(G))$ such that $ \rho_{k}(sl(G)) = \max\{\phi_{sl(G)}(X_i): i\in[k]\}$. Let $S_i = \pi_{L}(X_{i})$ and $T_i = \pi_{R}(X_i)$. Since the $X_i$'s are pairwise disjoint, $S_i\cap S_j = T_i\cap T_j = \emptyset$ for $i\neq j$. By Lemma~\ref{lem:sl_cond_equals_cond_sl}, $\phi_{sl(G)}(X_i) = \phi_{dir}(S_i, T_i)$. Thus, 
    \[\phi_{k,dir}(G) \leq \max\{\phid(S_i,T_i):i\in[k]\} = \max\{\phi_{sl(G)}(X_i): i\in[k]\} = \rho_k(sl(G)).\]
    \par It remains to show $\rho_{k}(sl(G))\leq \phi_{k,dir}(G)$. Let $S_1,\ldots, S_k$ and $T_1, \ldots, T_k$ be non-empty subsets of $V(G)$ such that for all $i\neq j$, $S_i \cap S_j = T_i\cap T_j = \emptyset$ and $\phi_{k,dir}(G) = \max\{\phid(S_i,T_i):i\in[k]\}$. For each $i\in [k]$, let $X_i = \pi^{-1}(S_i, T_i)$. By Lemma~\ref{lem:sl_cond_equals_cond_sl}, for all $i\in[k]$ we have $\phid(S_i, T_i) = \phi_{sl(G)}(X_i)$. Hence, 
    \[\rho_{k}(sl(G)) \leq \max\{\phi_{sl(G)}(X_i):i\in[k]\} = \max\{\phid(S_i, T_i):i\in[k]\} = \phi_{k,dir}(G),\]
    as desired.
\end{proof}

Combining Lemma~\ref{lem:higher_order_sl_cond_equals_cond_sl} with Theorem~\ref{thm:higher_order_cheeger} we prove Theorem~\ref{thm:sv_higher_order_cheeger}.

\begin{proof}[Proof of Theorem \ref{thm:sv_higher_order_cheeger}]
    Let $G = (V,E)$ be an Eulerian graph and $k\in\in[n]$. Let $sl(G)$ be the symmetric lift of $G$. By Theorem~\ref{thm:higher_order_cheeger},
        \[
      (1-\sigma_k)/2 \leq 
        \rho_{k}(sl(G))\leq O(k^2)\sqrt{1-\sigma_k}.
    \] 
Applying Lemma~\ref{lem:higher_order_sl_cond_equals_cond_sl},
        \[
      (1-\sigma_k)/2 \leq 
        \phi_{k,dir}(G)\leq O(k^2)\sqrt{1-\sigma_k},
    \] 
 as desired.
\end{proof}

We conclude the section with the proof of Theorem~\ref{thm:und_sv_higher-order-cheeger}.

\begin{proof}[Proof of Theorem~\ref{thm:und_sv_higher-order-cheeger}]
Let $G = (V,E)$ be an undirected graph and $k\in[n]$. Note that both $ \phi_{k,dir}(G)$ and $\rho_{k,dir}(G)$ are minimizing $\max\{\phid(S_i, T_i): i\in[k]\}$. Since $\phi_{k,dir}(G)$ is minimizing over a larger set, we must have $\phi_{k,dir}(G)\leq \rho_{k,dir}(G)$. 
\par It remains to show the $\rho_{k,dir}(G)\leq 3\phi_{k,dir}(G).$ Let $S_1,\ldots, S_k$ and $T_1,\ldots, T_k$ be non-empty sets of vertices such that $S_i\cap S_j =T_i\cap T_j=\emptyset$ for all $i\neq j\in[k]$ and $\phi_{k,dir}(G) = \max\{\phid(S_i, T_i):i\in [k]\}$.
\par We define $S'_1, \ldots S'_k$ and $T'_1,\ldots T'_k$ as follows.  
If $(\vol(S_i)+\vol(T_i))/3 \leq \vol(S_i\cap T_i)$ , then define $S'_i = T'_i = S_i\cap T_i$. If $\vol(S_i\cap T_i)<(\vol(S_i)+\vol(T_I))/3$, then define $S'_i = S_i\setminus T_i$ and $T'_i = T_i\setminus S_i$. Observe, that $S'_1,\ldots S'_k$ and $T'_1,\ldots, T'_k$ remain disjoint respectively  and satisfy that for all $i\in[k]$ we have $S'_i = T'_i$ or $S'_i\cap T'_i = \emptyset$. Note Lemma~\ref{lem:relating_phi_phisl_phibp}, shows that $\phid(S'_i, T'_i) \leq \min\{\bdir(S_i\setminus T_i, T_i\setminus S_i), \phi(S_i\cap T_i)\}\leq 3\phid(S_i, T_i)$. This implies that $\phid(S'_i, T'_i)\leq 3\phid(S_i, T_i)$, and so $$ \rho_{k, dir}(G)\leq \max\{\phid(S_i, T_i): i \in [k]\}\leq 3\max\{\phid(S'_i, T'_i): i \in [k]\} = \phi_{k,dir}(G).$$
\end{proof}

\section{Relating spectral and vertex expansion}

In this section we study the relation between vertex and spectral expansion for directed and undirected graphs. First, we recall the definition of vertex expansion.

\begin{definition}
   Let $G = (V,E)$ be a $d$-regular digraph on $n$ vertices. For $k\in\N, \delta >0 $ the graph $G$ is a \textbf{$(k, 1+\delta)$-vertex expander} if for every subset $S\subseteq V$ with $\Abs{S}\leq k$, we have $\Abs{N^+(S)}\geq (1+\delta)|S|$.
\end{definition}

It is well known that bounds on $\sigma_2$ imply vertex expansion. 

\begin{theorem}[\cite{Tanner84}, cf. \cite{pseudorandomness_Vadhan}]\label{thm:vadhan_spectral_vertex}
    If $G = (V,E)$ is a $d$-regular digraph and $\sigma_2(G)$ is the second-largest singular value of the normalized adjacency matrix of $G$, then $G$ is a $(n/2, 1+\delta)$ vertex expander, for $\delta = 1-\sigma_2$
\end{theorem}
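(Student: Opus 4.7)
}
The plan is the classical Tanner-style argument: apply the $\sigma_2$ bound to the component of the indicator vector $\chi_S$ orthogonal to $\mathbf{1}$, then convert an $\ell_2$ mass bound into an $\ell_1$-to-support-size bound via Cauchy--Schwarz. Since $G$ is $d$-regular, $D = d\matI$ and the normalized adjacency matrix is $A = M/d$. Both row and column sums of $A$ equal $1$, so $A\mathbf{1}=\mathbf{1}$ and $\mathbf{1}^{\top}A=\mathbf{1}^{\top}$, meaning $\mathbf{1}/\sqrt{n}$ is both a left and a right top singular vector, and on the orthogonal complement of $\mathbf{1}$ one has $\|A^{\top}y\|_2\le \sigma_2\|y\|_2$.

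Fix $S\subseteq V$ with $|S|\le n/2$. The first step is to observe that $A^{\top}\chi_S$ is supported on $N^+(S)$, because $(A^{\top}\chi_S)(v)=(1/d)\sum_{u\in S}M(u,v)$ is nonzero only if some $u\in S$ has an edge into $v$. Combined with $\mathbf{1}^{\top}A^{\top}\chi_S=\mathbf{1}^{\top}\chi_S=|S|$, Cauchy--Schwarz gives
\[
|S|^2=\|A^{\top}\chi_S\|_1^2\le |N^+(S)|\cdot\|A^{\top}\chi_S\|_2^2.
\]
So it suffices to upper-bound $\|A^{\top}\chi_S\|_2^2$.

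The second step is the standard spectral decomposition: write $\chi_S=(|S|/n)\mathbf{1}+y$ with $y\perp\mathbf{1}$ and $\|y\|_2^2=|S|-|S|^2/n$. Then $A^{\top}\chi_S=(|S|/n)\mathbf{1}+A^{\top}y$ with the two summands orthogonal, and the singular-value bound yields $\|A^{\top}\chi_S\|_2^2\le |S|^2/n+\sigma_2^2(|S|-|S|^2/n)$. Substituting back gives
\[
|S|\le |N^+(S)|\cdot\bigl(\sigma_2^2+(1-\sigma_2^2)|S|/n\bigr).
\]

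The final step is to simplify using $|S|/n\le 1/2$, which gives $|N^+(S)|\ge 2|S|/(1+\sigma_2^2)$, and then to verify the elementary inequality $2/(1+\sigma_2^2)\ge 2-\sigma_2=1+\delta$, equivalent to $\sigma_2(1-\sigma_2)^2\ge 0$. There is no real obstacle here; the one thing that has to be gotten right is the direction of the argument --- one must work with $A^{\top}\chi_S$ (not $A\chi_S$) so that the support lands on the out-neighborhood $N^+(S)$ rather than on the in-neighborhood, after which the left/right singular vector coincidence $\mathbf{1}$ for regular graphs makes the spectral bound on $A^{\top}$ immediate.
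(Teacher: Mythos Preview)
Your argument is correct and is exactly the classical Tanner bound that the paper cites; the paper itself does not supply a proof of this theorem (it is quoted as a known result from \cite{Tanner84,pseudorandomness_Vadhan}), so there is nothing to compare against beyond noting that your write-up matches the standard reference argument. The one subtlety you flagged---using $A^{\top}\chi_S$ so the support is $N^+(S)$, and relying on $d$-regularity so that $\mathbf{1}/\sqrt{n}$ is simultaneously the top left and right singular vector---is handled correctly.
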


Alon proved a partial converse for bounded-degree undirected graphs called magnifiers, which is a weaker notion of vertex expansion.

\begin{definition}
    Let $G = (V,E)$ be a $d$-regular undirected graph on $n$ vertices. For $k\in\N, \delta >0 $ the graph $G$ is a $(k,\delta)$-\textbf{magnifier} if for each $S\subseteq V$ with $|S|\leq k$, we have $|N^+(S)\setminus S| \geq \delta |S|.$

\end{definition}

Observe that if $G$ is a $(k, 1+\delta)$-expander, then $G$ is a $(k, \delta)$-magnifier. Alon's result shows:

\begin{theorem}[\cite{AlonEigenvaluesExpanders}]\label{thm:alon_vertex_spectral}
    Let $G = (V, E)$ be a $d$-regular undirected graph and $\mu_2$ be the second-largest eigenvalue of the normalized adjacency matrix of $G$. If $G$ is an $(n/2,\delta)$-magnifier, then $1-\mu_2\geq \Omega(\delta^2/d)$.
\end{theorem}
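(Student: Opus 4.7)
Proof proposal. The plan is to invoke the variational characterization
\[
1 - \mu_2 \;=\; \min_{f \perp \mathbf{1}}\;
\frac{\sum_{\{u,v\} \in E}(f(u) - f(v))^2}{d\,\|f\|_2^2},
\]
reduce to a nonnegative test function supported on a set of size at most $n/2$, and then derive a Poincar\'e-type inequality whose constant is controlled by the magnifier parameter $\delta$, crucially without introducing an extraneous factor of $d$.

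First I would reduce to a nonnegative test function. Let $f$ be a unit eigenvector of $A_G$ with eigenvalue $\mu_2$, and, after possibly replacing $f$ by $-f$, assume $S^+ := \{v : f(v) > 0\}$ has size at most $n/2$. Set $g = f^+$. Applying $A_G f = \mu_2 f$ vertex-by-vertex on $S^+$ and discarding the nonpositive off-support contributions yields $\langle A_G g,\, g\rangle \geq \langle A_G f, g\rangle = \mu_2 \|g\|_2^2$, i.e.,
\[
\sum_{\{u,v\}\in E}(g(u)-g(v))^2 \;=\; d\|g\|_2^2 - 2\sum_{\{u,v\}\in E} g(u)g(v) \;\leq\; d(1-\mu_2)\|g\|_2^2.
\]
Hence it suffices to prove the Poincar\'e-type inequality $\sum_{\{u,v\}\in E}(g(u)-g(v))^2 \geq c\,\delta^2\,\|g\|_2^2$ for some absolute constant $c$ and every nonnegative $g$ whose support has size at most $n/2$; combined with the display above this yields $1 - \mu_2 \geq c\delta^2/d$.

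The main obstacle is proving this Poincar\'e inequality without losing a factor of $d$. For each $t \geq 0$ the level set $L_t = \{v : g(v) \geq t\}$ lies in $S^+$ and so has size at most $n/2$, so the magnifier hypothesis gives $|N(L_t)\setminus L_t| \geq \delta|L_t|$ and hence $e(L_t, V\setminus L_t) \geq \delta|L_t|$. The standard Cheeger-style chain---coarea on $g^2$ combined with Cauchy--Schwarz applied to $|g(u)^2 - g(v)^2| = |g(u)-g(v)|\,|g(u)+g(v)|$ together with $\sum_{\{u,v\}\in E}(g(u)+g(v))^2 \leq 2d\|g\|_2^2$---only produces $\sum(g(u)-g(v))^2 \geq \delta^2\|g\|_2^2/(2d)$ and hence $1 - \mu_2 \geq \Omega(\delta^2/d^2)$, matching the unfavorable $A^\top A$ reduction recalled in the introduction. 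The lost factor of $d$ is paid in the step $\sum(g(u)+g(v))^2 \leq 2d\|g\|_2^2$, where each vertex is counted in all $d$ of its incident edges, whereas the magnifier only produces $\delta|L_t|$ \emph{distinct} outside neighbors, not $\delta|L_t|$ boundary edges. The plan for closing the gap is to replace the edge-weighted Cauchy--Schwarz by one that pays only for a single boundary edge per distinct outside neighbor: for each $t$, select a set $F_t$ of $\delta|L_t|$ edges realizing distinct boundary vertices, and integrate a vertex-weighted one-sided inequality over $t$. Making this selection consistent across nested levels so that the integrated inequality does not double-count internal edges is the technical heart of the argument, and is exactly the delicate combinatorial step that distinguishes Alon's proof from a straightforward Cheeger argument.
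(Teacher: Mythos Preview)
The paper does not prove this statement; Theorem~\ref{thm:alon_vertex_spectral} is quoted from \cite{AlonEigenvaluesExpanders} and used as a black box in the proofs of Corollary~\ref{thm:digraph_vertex_spectral_new} and Theorem~\ref{thm:sv_vs_vertex}. There is therefore no in-paper proof to compare your proposal against.

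That said, your outline is a faithful reconstruction of the shape of Alon's original argument. The reduction to the nonnegative part $g=f^{+}$ supported on at most $n/2$ vertices is correct (your inequality $\langle A_G g,g\rangle\ge\langle A_G f,g\rangle=\mu_2\|g\|_2^2$ holds because $A_G$ is entrywise nonnegative and $g-f\ge 0$, $g\ge 0$), and your diagnosis of where the extra factor of $d$ is lost in the naive Cheeger chain is exactly right: the step $\sum_{\{u,v\}}(g(u)+g(v))^2\le 2d\|g\|_2^2$ charges every incident edge, whereas the magnifier hypothesis only promises $\delta|L_t|$ distinct outside \emph{vertices} at each level.

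Where your proposal stops is precisely where the work is, and you say so. To turn this into a proof you need to make the ``one boundary edge per outside neighbor, consistent across levels'' selection concrete. Alon's device is to order the support $v_1,\dots,v_m$ by decreasing $g$-value and use the magnifier property on the prefix sets $\{v_1,\dots,v_k\}$ to produce, for each $v_i$, a neighbor $u_i$ with $g(u_i)<g(v_i)$ in such a way that no vertex occurs as some $u_i$ more than $O(1/\delta)$ times (this is a Hall-type counting argument). One then bounds $\sum_i (g(v_i)-g(u_i))^2$ from below by $\Omega(\delta^2)\|g\|_2^2$ via Cauchy--Schwarz on $\sum_i(g(v_i)^2-g(u_i)^2)$, and from above by $O(1/\delta)\cdot\sum_{\{u,v\}\in E}(g(u)-g(v))^2$ because each edge is used at most $O(1/\delta)$ times. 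Combining these gives $\sum_{\{u,v\}\in E}(g(u)-g(v))^2\ge\Omega(\delta^3)\|g\|_2^2$, and a slightly sharper accounting recovers $\Omega(\delta^2)$. Without this explicit matching construction and multiplicity bound, your proposal remains an outline rather than a proof.
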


Observe, that the bound $1-\mu_2\geq \Omega(\delta^2/d)$ is tight by considering an undirected cycle of length $n = 4/\delta$ with $d-2$ self loops at each vertex. It was observed in \cite{pseudorandomness_Vadhan} that by considering the square of the graph, it follows that $1-\sigma_2 = \Omega(\delta^2/d^2)$ yielding a full converse to Theorem~\ref{thm:alon_vertex_spectral} when $d = O(1)$. This argument extends to directed graphs by considering $A^TA$. 

\begin{corollary}[\cite{pseudorandomness_Vadhan}]\label{thm:digraph_vertex_spectral_new}
    Let $G$ be a $d$-regular directed graph and $\sigma_2$ be the second-largest singular value of the normalized adjacency matrix of $G$. If $G$ is a $(n/2, 1+\delta)$-vertex expander, then $1-\sigma_2\geq \Omega\left(\delta^2/d^2\right)$. 
\end{corollary}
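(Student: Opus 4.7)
The plan is to apply Alon's Theorem (\cref{thm:alon_vertex_spectral}) to the $d^2$-regular undirected graph $G''$ whose weighted adjacency matrix is $M_G M_G^T$. The entry $(M_G M_G^T)_{uv}$ counts the common out-neighbors of $u$ and $v$ in $G$, so $M_G M_G^T$ is symmetric, and its row sums equal $d \cdot d^+(u) = d^2$, making $G''$ an undirected $d^2$-regular graph (with a self-loop of weight $d$ at each vertex). Its normalized adjacency matrix is $(M_G M_G^T)/d^2 = A_G A_G^T$, whose eigenvalues are the squared singular values of $A_G$; in particular $\mu_2(G'') = \sigma_2(G)^2$.

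The key combinatorial step is to show that if $G$ is an $(n/2, 1+\delta)$-vertex expander then $G''$ is an $(n/2, \delta)$-magnifier. Fix $S \subseteq V$ with $|S| \le n/2$. Since $u$ and $v$ are adjacent in $G''$ iff they share a common out-neighbor in $G$, the neighborhood is $N_{G''}(S) = N_G^-(N_G^+(S))$. Let $T := N_G^+(S)$, so $|T| \ge (1+\delta)|S|$ by the expansion hypothesis. Because $G$ is $d$-regular, every $s \in S$ sends all $d$ of its out-edges into $T = N_G^+(S)$, so $e_G(S, T) = d|S|$; on the other hand $e_G(N_G^-(T), T) = d|T|$. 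Subtracting gives
\[
e_G\bigl(N_G^-(T)\setminus S,\, T\bigr) = d|T| - d|S| \ge d\delta|S|,
\]
and since each vertex of $G$ has out-degree $d$, we conclude $|N_G^-(T)\setminus S| \ge \delta|S|$, which is exactly the magnifier condition for $G''$.

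Applying Alon's Theorem to $G''$ (of regularity $d^2$) then yields $1 - \sigma_2(G)^2 = 1 - \mu_2(G'') \ge \Omega(\delta^2/d^2)$, and combining with $1 - \sigma_2(G)^2 = (1-\sigma_2(G))(1+\sigma_2(G)) \le 2(1-\sigma_2(G))$ finishes the proof. The only nontrivial step is the magnifier bound above; it works cleanly precisely because $d$-regularity forces $e_G(S,T) = d|S|$ exactly, so any additional in-edges to $T$ must originate outside $S$ and thus contribute to $N_G^-(T)\setminus S$. One minor technicality is that Alon's Theorem is stated for simple regular graphs while $G''$ is a weighted graph with self-loops, but the spectral proof extends verbatim to regular weighted graphs and self-loops cannot affect the vertex-expansion quantity $|N^+(S)\setminus S|$.
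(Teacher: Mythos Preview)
Your proof is correct and follows essentially the same approach as the paper: both consider the undirected $d^2$-regular graph with normalized adjacency matrix $A_G A_G^{T}$, verify that $N_{G''}(S)=N_G^{-}(N_G^{+}(S))$, and then apply Alon's theorem. The only cosmetic difference is that the paper establishes the slightly stronger statement that $G''$ is an $(n/2,1+\delta)$-vertex expander (via $|N_G^{-}(T)|\ge |T|$ from regularity) and implicitly passes to the magnifier condition, whereas you prove the magnifier bound $|N_G^{-}(T)\setminus S|\ge \delta|S|$ directly by edge-counting; both routes are equivalent here.
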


\begin{proof}
     Let $G$ be a $d$-regular directed graph and $\sigma_2$ be the second-largest singular value of the normalized adjacency matrix of $G$. Assume $G$ is a $(n/2, 1+\delta)$-vertex expander. We show the graph $G'$ with normalized adjacency matrix $AA^T$ is an $d^2$-regular undirected $(n/2, 1+\delta)$-vertex expander. For every $S\subseteq V(G')$, $N_{G'}(S) = N^{-}_G(N^+_G(S))$. By regularity, $|N^{-}_G(N^+_G(S))| \geq  |N^+_G(S)|$. This implies for all $|S|\leq n/2$, $|N_{G'}(S)|\geq (1+\delta)|S|$. By Theorem~\ref{thm:alon_vertex_spectral}, $1-\sigma_2 = \Omega( \delta^2/d^2)$.
\end{proof}

However this bound on $1-\sigma_2$ has an extra factor of $d$ compared to Alon's bound on $1-\mu_2$ in Theorem~\ref{thm:alon_vertex_spectral}. We show that we can save this factor of $d$ by considering $sl(G)$. 

\begin{theorem}\label{thm:sv_vs_vertex}
      Let $G$ be a $d$-regular directed graph and $\sigma_2$ be the second-largest singular value of the normalized adjacency matrix of $G$. If $G$ is a $(n/2, 1+\delta)$-vertex expander, then $1-\sigma_2\geq \Omega\left(\delta^2/d\right)$.
\end{theorem}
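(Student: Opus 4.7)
The plan is to reduce the claim to an application of Alon's Theorem~\ref{thm:alon_vertex_spectral} on the symmetric lift. Let $H = sl(G)$, a $d$-regular undirected graph on $2n$ vertices with $\mu_2(H) = \sigma_2(G)$ by \cref{lem:eig_vals_sl}. It thus suffices to show $1-\mu_2(H) \geq \Omega(\delta^2/d)$, for which one applies Alon's theorem to $H$ once a magnifier-type property has been established. Note that the degree of $H$ is exactly $d$ (not $d^2$ as in the $A^T A$ reduction of \cref{thm:digraph_vertex_spectral_new}), which is precisely where the factor of $d$ improvement comes from.

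The key step is to analyze, for each $S \subseteq V(H)$ with $|S| \leq n$, the magnifier quantity $|N_H(S) \setminus S|$. Decomposing $S = A_L \cup B_R$ via \cref{lem:relating_edges_vol_in_G_slG} (so $A,B \subseteq V$ with $|A|+|B| \leq n$), an analogous calculation gives
\[
|N_H(S) \setminus S| = |N^+_G(A) \setminus B| + |N^-_G(B) \setminus A|.
\]
Combining the hypothesis $|N^+_G(A)| \geq (1+\delta)|A|$ for $|A| \leq n/2$ with the trivial Eulerian bound $|N^-_G(B)| \geq |B|$ (which holds because the $d|B|$ in-edges of $B$ must originate from distinct source slots of degree-$d$ vertices), one obtains
\[
|N^+_G(A) \setminus B| + |N^-_G(B) \setminus A| \;\geq\; [(1+\delta)|A| - |B|]_+ + [|B| - |A|]_+ \;\geq\; \delta|A|
\]
whenever $|A| \leq n/2$. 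A short case analysis on the ratio $|B|/|A|$ (using the trivial part $|B|-|A|$ when $|B|$ dominates and $\delta|A|$ otherwise) then upgrades this to $|N_H(S)\setminus S| \geq \Omega(\delta)(|A|+|B|)$ for any $S$ whose $L$-projection has size at most $n/2$.

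The main obstacle is the case $|A| > n/2$, where the vertex expansion hypothesis cannot be applied to $A$. I would address this by following Alon's proof of \cref{thm:alon_vertex_spectral} directly rather than invoking its statement as a black box: his argument applies the magnifier bound only to the positive level sets of the second eigenvector $f = (g,h) \in \R^L \oplus \R^R$ of $H$. By negating $f$ if necessary, one can arrange $|\{u \in V : g_u > 0\}| \leq n/2$, so that every level set $\{v \in V(H) : f_v \geq t\}$ with $t > 0$ has $L$-projection of size at most $n/2$ and therefore falls into the easy case above. The bipartite structure of $H$ also forces $\|g\| = \|h\|$ (since $g, h$ form a pair of singular vectors of $A_G$ corresponding to $\sigma_2$), which ensures that the positive part of $f$ has sufficient $\ell_2$-norm for the Cauchy-Schwarz step of Alon's argument to go through. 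Putting everything together, Alon's proof yields $1-\mu_2(H) \geq \Omega(\delta^2/d)$ and hence $1 - \sigma_2(G) \geq \Omega(\delta^2/d)$, as claimed.
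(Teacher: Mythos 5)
Your high-level plan is the same as the paper's: reduce to Alon's \cref{thm:alon_vertex_spectral} applied to $H=sl(G)$, noting $H$ is $d$-regular on $2n$ vertices with $\mu_2(H)=\sigma_2(G)$, and establish a magnifier-type bound by analyzing $|N_H(S)\setminus S| = |N^+_G(A)\setminus B| + |N^-_G(B)\setminus A|$ with the out-expansion on $A$ and the trivial in-degree bound on $B$. Your calculation for $|A|\leq n/2$ matches the paper's Lemma~\ref{lem:mag_vs_expansion}. Where you diverge is the case $|A|>n/2$, and your proposed workaround leaves a gap: to invoke Alon's theorem as a black box on the $2n$-vertex graph $H$, you need $H$ to be an $(n,\Omega(\delta))$-magnifier, i.e.\ a bound that holds for \emph{every} $|S|\leq n$, not only those with $|\pi_L(S)|\leq n/2$. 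You instead propose to open up Alon's proof and argue that the magnifier hypothesis is only ever invoked on positive level sets of the second eigenvector, after negating $f=(g,h)$ to force $|\{u: g_u>0\}|\leq n/2$. But this negation can simultaneously push $|\supp(f^+)|$ above $n$ (e.g.\ $g$ with $0.8n$ positive entries, $h$ with $0.1n$), and most write-ups of Alon's argument assume $|\supp(f^+)|\leq |V(H)|/2$ as a standing normalization. It may be that this normalization is never genuinely needed once the modified magnifier condition is in hand, but you have not checked this, and doing so requires re-deriving rather than citing the theorem. The observation that $\|g\|=\|h\|$ does not help here: equal mass on the two sides says nothing about how $g$ or $h$ splits between positive and negative entries, so it cannot ensure the positive part has ``sufficient'' norm.

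The paper avoids this entirely with a small fix inside the magnifier lemma: replace $|N^+_G(A)|\geq (1+\delta)|A|$ with $|N^+_G(A)|\geq (1+\delta)\min\{|A|,n/2\}$, which is valid for all $A$ by passing to a size-$n/2$ subset. Combined with the dichotomy on $\bigl||A|-|B|\bigr|$ (if this difference is already $\geq \delta|S|/8$ the trivial in-degree bound alone finishes the job; otherwise $|A|$ and $|B|$ are within $\delta|S|/8$ of each other, and $|A|+|B|\leq n$ forces $|A|$ to be within $O(\delta n)$ of $n/2$), this gives $|N_H(S)\setminus S|\geq \Omega(\delta)|S|$ for every $|S|\leq n$. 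Alon's theorem then applies verbatim. I'd recommend patching your argument this way rather than modifying Alon's proof.
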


Observe that, like Theorem~\ref{thm:alon_vertex_spectral}, the bound on $1-\sigma_2$ is tight up to a constant factor. Furthermore, we cannot replace $(n/2, 1+\delta)$-vertex expansion with $(n/2, \delta)$-magnification. The complete bipartite graph $K_{n/2,n/2}$ is a $(n/2, 1)$-magnifier, but satisfies $1-\sigma_2 = 0$. 

To prove Theorem~\ref{thm:sv_vs_vertex} we show that if $G$ is a $(n,1+\delta)$-vertex expander, then $sl(G)$ is a $(2n,\Omega(\delta))$-magnifier. 

\begin{lemma}\label{lem:mag_vs_expansion}
    Let $G$ be a directed $d$-regular that is a $(n/2, 1+\delta)$-vertex expander. Then $sl(G)$ is a $(n, \delta/8)$-magnifier.
\end{lemma}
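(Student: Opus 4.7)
The plan is to directly verify the magnifier property by translating it, via the projections $\pi_L,\pi_R$, into a statement about out-neighborhoods and in-neighborhoods in $G$. Given $X\subseteq V(sl(G))$ with $|X|\leq n$, set $A=\pi_L(X)$ and $B=\pi_R(X)$, so $|X|=|A|+|B|$; the bipartiteness of $sl(G)$ gives $|N_{sl(G)}(X)\setminus X|=|N^+_G(A)\setminus B|+|N^-_G(B)\setminus A|$, so the goal is to lower-bound this sum by $(\delta/8)(|A|+|B|)$ using only out-expansion of $G$ for sets of size at most $n/2$.

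I would handle three exhaustive cases. When $|A|,|B|\leq n/2$, use $|N^+_G(A)|\geq (1+\delta)|A|$ from the hypothesis and the trivial inequality $|N^-_G(B)|\geq |B|$ (which holds in any $d$-regular digraph, since the $d|B|$ in-edges of $B$ all originate in $N^-_G(B)$, whose total out-degree is $d|N^-_G(B)|$). Plugging these into $|N^+_G(A)\setminus B|\geq \max(0,(1+\delta)|A|-|B|)$ and $|N^-_G(B)\setminus A|\geq \max(0,|B|-|A|)$, a brief case split on whether $|A|\geq|B|$, $|A|<|B|\leq(1+\delta)|A|$, or $|B|>(1+\delta)|A|$ shows the sum is at least $(\delta/8)(|A|+|B|)$ in each. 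When $|A|>n/2$ (so $|B|<n/2$), applying out-expansion to any size-$\lfloor n/2\rfloor$ subset of $A$ yields $|N^+_G(A)|\geq (1+\delta)\lfloor n/2\rfloor$, so $|N^+_G(A)\setminus B|\geq(1+\delta)\lfloor n/2\rfloor - |B|$, which comfortably exceeds $(\delta/8)(|A|+|B|)\leq \delta n/8$.

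The delicate case, and the main obstacle, is $|B|>n/2$ (so $|A|<n/2$), where I need a lower bound on $|N^-_G(B)|$ better than the trivial $|B|$ even though we have no hypothesis on in-neighborhoods of $G$. The key trick is a dual application of out-expansion: let $W=V\setminus N^-_G(B)$, so $N^+_G(W)\subseteq V\setminus B$ by definition. Since $|V\setminus B|<n/2$, we cannot have $|W|>n/2$, for then applying out-expansion to a size-$\lfloor n/2\rfloor$ subset of $W$ would force $(1+\delta)\lfloor n/2\rfloor\leq|V\setminus B|<n/2$, a contradiction. Hence $|W|\leq n/2$, and out-expansion applied to $W$ itself gives $(1+\delta)|W|\leq n-|B|$, i.e.\ $|N^-_G(B)|=n-|W|\geq (n\delta+|B|)/(1+\delta)$. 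A short calculation using $|B|\geq n/2$ and $|A|\leq n/2$ then yields $|N^-_G(B)\setminus A|\geq |N^-_G(B)|-|A|\geq n\delta/(2(1+\delta))\geq n\delta/8\geq(\delta/8)(|A|+|B|)$, as required. The constant $\delta/8$ is likely loose, but is sufficient for the intended application to \cref{thm:sv_vs_vertex}.
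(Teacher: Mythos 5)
Your proposal is correct, and while it shares the paper's basic framework of projecting $X$ to $A=\pi_L(X)$, $B=\pi_R(X)$ and using out-expansion of $G$, the case decomposition and the key mechanism differ. The paper first cases on whether $\bigl||S_L|-|S_R|\bigr|$ exceeds $(\delta/8)|S|$: in the imbalanced case the bound follows purely from $|N(S_L)|\geq|S_L|$ and $|N(S_R)|\geq|S_R|$ (regularity), and in the balanced case it reduces to the single quantity $(1+\delta)\min\{|S_L|,n/2\}-|S_L|$, where the balance guarantees $|S_L|\approx|S|/2$ and the $\min$ with $n/2$ is handled directly. You instead case on which of $|A|,|B|$ (if either) exceeds $n/2$. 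Your Cases 1 and 2 use the same two ingredients as the paper (out-expansion plus regularity), just organized differently. The genuinely new idea is in your Case 3: the dual application of out-expansion to $W=V\setminus N^-_G(B)$, using $N^+_G(W)\subseteq V\setminus B$ to squeeze $|W|\leq n/2$ and then deduce a nontrivial lower bound $|N^-_G(B)|\geq(n\delta+|B|)/(1+\delta)$, which the hypothesis (phrased only in terms of out-neighborhoods) does not give directly. The paper sidesteps the need for any such bound by confining out-expansion to $S_L$ in the balanced case; your argument instead extracts a stand-alone in-expansion statement, which is a clean observation with independent interest. Both arguments carry the same tacit assumption that $\delta$ is $O(1)$ (the paper explicitly invokes $\delta\leq1$ at one point; your arithmetic uses $\delta\leq3$), and both absorb the same floor/ceiling slack in $\lfloor n/2\rfloor$, so neither is more general on that front.
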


First we show how to deduce Theorem~\ref{thm:sv_vs_vertex} from Lemma~\ref{lem:mag_vs_expansion}

\begin{proof}[Proof of Theorem~\ref{thm:sv_vs_vertex}]
 Let $G$ be a directed $d$-regular and $(n/2, 1+\delta)$-vertex expander. By Lemma~\ref{lem:mag_vs_expansion}, $sl(G)$ is a $(n, \delta/8)$-magnifier. By Theorem~\ref{thm:alon_vertex_spectral},
 $1-\sigma_2(G) = 1-\mu_2(sl(G)) \geq \Omega(\delta^2/d)$.
    
\end{proof}

Now we prove Lemma~\ref{lem:mag_vs_expansion}. 

\begin{proof}[Proof of Lemma~\ref{lem:mag_vs_expansion}]
    We show $sl(G)$ is a $(n, \delta/8)$-magnifier. Let $S\subseteq V(sl(G))$ satisfy that $|S|\leq n$. We must show that $|N(S)\setminus S|\geq \delta|S|/8$. Define $S_L = S\cap L$ and $S_R = S\cap R$. 

      Suppose $|S_L|\geq |S_R| + \delta|S|/8$. Then, 
      \[
      |N(S)\setminus S||\geq |N(S_L)\setminus S_R|= |N(S_L)\setminus S_R|\geq |S_L|-|S_R|\geq \frac{\delta}{8}|S|.
      \]
      By an analogous argument if $|S_R|\geq |S_L| + \delta|S|/8$, then $|N(S)\setminus S|\geq \delta|S|/8$. Hence, we may assume $\bigabs{|S_L|-|S_R|}\leq \delta |S|/8$, which is equivalent to 
      \begin{align}\label{eqn:1}
      |S_R|-\frac{\delta}{8}|S|\leq |S_L||\leq |S_R|+\frac{\delta}{8}|S|.
      \end{align}
      
      Observe,
      \begin{align*}
          |N(S)\setminus S|&\geq |N(S_L)|-|S_R| + |N(S_R)|-|S_L|\\
          &= |N(S_L)|-|S_L|\\
          &\geq (1+\delta)\min\{|S_L|, n/2\} - |S_L|.
      \end{align*}
      If $|S_L|\geq n/2$, then \ref{eqn:1} implies  $|S_L|\leq n(1+\delta/4)/2$. This along with the inequality above implies $|N(S)\setminus S|\geq 3\delta |S|/8$. If $|S_L|\leq n/2$, then \ref{eqn:1} implies $|S|(1-\delta/4)/2\leq |S_L|$. This along with the inequality above and $\delta\leq 1$ implies $|N(S)\setminus S|\geq 3\delta |S|/8$.
      
\end{proof}

\ifnum\conferenceversion=0

\fi

\phantomsection
\addcontentsline{toc}{section}{Bibliography}
{\footnotesize
\bibliographystyle{amsalpha} 
\bibliography{refs}
}
\clearpage


\end{document}